\newcommand{\R}{\mathbb{R}}
\newcommand{\chfn}{\mathbbm{1}}
\newcommand{\bfR}{\mathbb{R}}
\newcommand{\mcE}{\mathcal{E}}
\newcommand{\mcM}{\mathcal{M}}
\newcommand{\mcV}{\mathcal{V}}
\newcommand{\mcR}{\mathcal{R}}
\newcommand{\mcU}{\mathcal{U}}
\newcommand{\bS}{S}
\newcommand{\mfR}{\mathfrak{R}}
\newcommand{\mfW}{\mathfrak{W}}
\newcommand{\mcW}{\mathcal{W}}
\newcommand{\mcL}{\mathcal{L}}
\newcommand{\Tm}{T_0}
\newcommand{\Rm}{\mcR_0}
\newcommand{\Vm}{\mcV_0}
\newtheorem{theorem}{Theorem}
\newtheorem{lemma}{Lemma}
\theoremstyle{definition}
\newtheorem{remark}{Remark}
\title{\Huge{Asymptotic Growth and Decay of Two-Dimensional Symmetric Plasmas}}
\author{Jonathan Ben-Artzi\\ {\small School of Mathematics}\\ {\small Cardiff University}\\ {\small Cardiff, United Kingdom}\\ {\small \tt Ben-ArtziJ@cardiff.ac.uk} \\[1cm]
Baptiste Morisse\\ {\small School of Mathematics}\\ {\small Cardiff University}\\ {\small Cardiff, United Kingdom}\\ {\small \tt MorisseB@cardiff.ac.uk}\\[1cm]
Stephen Pankavich \\ {\small Department of Applied Mathematics and Statistics}\\ {\small Colorado School of Mines}\\ {\small Golden, CO USA}\\ {\small \tt pankavic@mines.edu} }
\date{\today}
\begin{document}
\renewcommand{\thefootnote}{\fnsymbol{footnote}} 
\footnotetext{\emph{Key words:} Kinetic Theory, Vlasov-Poisson, Asymptotic Behavior}     
\footnotetext{\emph{MSC 2020:} 35Q83, 35B40}     
\footnotetext{\emph{Acknowledgements:} JBA  and BM acknowledge support from an Engineering and Physical Sciences Research Council Fellowship (EP/N020154/1). 
SP was supported by the US National Science Foundation under awards DMS-1911145 and DMS-2107938.}     
\renewcommand{\thefootnote}{\arabic{footnote}} 
\maketitle

\begin{abstract}
We study the large time behavior of classical solutions to the two-dimensional Vlasov-Poisson (VP) and relativistic Vlasov-Poisson (RVP) systems launched by radially-symmetric initial data with compact support.
In particular, we prove that particle positions and momenta grow unbounded as $t \to \infty$ and obtain sharp rates on the maximal values of these quantities on the support of the distribution function for each system.
Furthermore, we establish nearly sharp rates of decay for the associated electric field, as well as upper and lower bounds on the decay rate of the charge density in the large time limit. We prove that, unlike (VP) in higher dimensions, smooth solutions do not scatter to their free-streaming profiles as $t \to \infty$ because nonlinear, long-range field interactions dominate the behavior of characteristics due to the exchange of energy from the potential to the kinetic term. In this way, the system may ``forget'' any previous configuration of particles.
\end{abstract}

\tableofcontents

\section{Introduction}

The motion of an electrostatic and collisionless plasma in two spatial and momentum dimensions (i.e., $x,v \in \mathbb{R}^2$) is given by the Vlasov-Poisson system:
 \begin{equation}
 \tag{VP}
 \label{VP}
\left \{ \begin{aligned}
& \partial_{t}f+v\cdot\nabla_{x}f+E \cdot\nabla_{v}f=0\\
& \rho(t,x)=\int_{\mathbb{R}^2} f(t,x,v)\,dv\\
& E(t,x) = \int_{\mathbb{R}^2} \frac{x-y}{\vert x - y \vert^2} \rho(t,y) \ dy.
\end{aligned} \right .
\end{equation}
Here, $t \geq 0$ represents time, $f(t,x,v) \geq 0$ is the particle density, $\rho(t,x)$ is the associated charge density, $E(t,x)$ is the self-consistent electric field generated by the charged particles, and we have chosen units such that the mass and charge of each particle are normalized.
In the present paper, we consider the Cauchy problem and require given initial data $f_0 \in C^1_c(\mathbb{R}^4)$ such that
$f(0,x,v) = f_0(x,v) \geq 0$
to complete the description of the system. One can also consider relativistic effects, for which the velocity, now denoted by
$$\hat{v}=\frac{v}{\sqrt{1+|v|^2}},$$
is no longer a multiple of the momentum $v$, and \eqref{VP} is replaced by the relativistic Vlasov-Poisson system
 \begin{equation}
 \tag{RVP}
 \label{RVP}
\left \{ \begin{aligned}
& \partial_{t}f+\hat{v}\cdot\nabla_{x}f+E \cdot\nabla_{v}f=0\\
& \rho(t,x)=\int_{\mathbb{R}^2} f(t,x,v)\,dv,\\
& E(t,x) = \int_{\mathbb{R}^2} \frac{x-y}{\vert x - y \vert^2} \rho(t,y) \ dy.
\end{aligned} \right .
\end{equation}
We refer to \cite{Glassey} as a general reference concerning these well-known plasma models.

It is known that given smooth initial data both \eqref{VP} and \eqref{RVP} possess smooth global-in-time solutions \cite{Rammaha, UkaiOkabe, Wollman}. 
In fact, global existence of classical solutions to the former system has also been established in three-dimensions \cite{LP, Pfaf, Schaeffer}.
Contrastingly, the unsolved problem of interest here concerns the large-time asymptotic behavior of such models.
Results of this nature exist for \eqref{VP} and \eqref{RVP} in some special cases, including the three-dimensional problem with small \cite{BD,Pausader,SPnew} or symmetric data \cite{Horst, SP, Jackcyl}, and in a one-dimensional setting \cite{BKR, GPS, GPS2, Sch}.
In general, determining the large time asymptotic behavior of the two-dimensional systems should be significantly more challenging than in higher dimensions \cite{SP2022}, as the long-range particle interactions induced by the electric field are stronger for $d < 3$ than the dispersive effects engrained within the Vlasov equation.

In the case of radially-symmetric initial data, i.e. in which $f_0$ is invariant under rotations in phase space, the solutions of both \eqref{VP} and \eqref{RVP} are known \cite{Horst2} to remain radially-symmetric for all $t \geq 0$.
In this case, it is useful to consider new variables that completely describe solutions with such symmetry.  In particular, defining the spatial radius, radial  {momentum}, and square of the angular momentum respectively by
\begin{equation}
\label{ang}
r = \vert x \vert, \qquad w = \frac{x \cdot v}{r}, \qquad \ell = \vert x \wedge v \vert^2,
\end{equation}
the radial-symmetry of $f_0$ implies that the distribution function, charge density, potential, and electric field take special forms for all time. Namely, in the classical case the particle distribution $f = f(t,r,w,\ell)$ satisfies the reduced Vlasov equation
\begin{equation}
\label{vlasovang}
\partial_{t}f+w\partial_r f+\left ( \frac{\ell}{r^3} + \frac{m(t,r)}{ 2\pi r} \right ) \partial_w f=0,
\end{equation}
whereas in the relativistic case this equation takes the form
\begin{equation}
\label{vlasovang-rel}
\partial_{t}f+\frac{w}{\sqrt{1+w^2+\ell r^{-2}}}\partial_r f+\left ( \frac{\ell r^{-3}}{\sqrt{1+w^2+\ell r^{-2}}} + \frac{m(t,r)}{2\pi r} \right ) \partial_w f=0.
\end{equation}
Here, the mass and charge density satisfy the reduced descriptions
\begin{equation}
\label{massang}
m(t,r) = 2\pi \int_0^r q \rho(t,q) \ dq
\end{equation}
and
\begin{equation}
\label{rhoang}
\rho(t,r) = \frac{1}{r} \int_0^\infty \int_{-\infty}^{\infty} f(t,r,w,\ell) \ell^{-1/2} \ dw \ d\ell.
\end{equation}
The electric field is then given by the expression
\begin{equation}
\label{fieldang}
E(t,x) = \frac{m(t,r)}{2\pi r} \frac{x}{r},
\end{equation}
and the associated electric potential
\begin{equation}
\label{U}
\begin{split}
\mcU(t,r) 
&=-\frac{1}{2\pi}\ln |x| \star\rho(t,x)\\\
&= -\frac{1}{2\pi} \int_0^r \frac{m(t,q)}{q} \ dq - \int_0^\infty\rho(t,q)q\ln(q)\ dq
\end{split}
\end{equation}
satisfies
$$-\nabla_x \mcU(t,r) = \frac{m(t,r)}{2 \pi r} \frac{x}{r} = E(t,x).$$
For completeness, a full derivation of these representations is given in the appendix.

Notice that while the symmetry does not significantly alter the complexity of the Vlasov equation (i.e., phase space is described by three independent variables rather than four),
the form of the resulting electric field is considerably simpler and will allow us to easily orient the force imposed on particles with respect to the origin.
The total mass is conserved and can be expressed as
$$\mcM = 2\pi \int_0^\infty \int_{-\infty}^{\infty} \int_0^\infty f_0(r, w, \ell) \ell^{-1/2} \ d\ell dw dr$$
so that $0 \leq m(t,r) \leq \mcM$ for all $t \geq 0$ and $r > 0$.
Finally, the energy of either system is conserved in time and can be written as the sum of the kinetic and potential parts:
\begin{eqnarray*}
\mcE_{\mathrm{VP}} & = &\frac{1}{2} \int_0^\infty \int_{-\infty}^\infty \int_0^\infty (w^2 + \ell r^{-2} ) f(t,r,w,\ell) \ell^{-1/2} \ d\ell dw dr\\
& \ & \quad  + \frac{1}{2}\int_0^\infty \int_{-\infty}^\infty \int_0^\infty \mcU(t,r) f(t,r,w,\ell) \ell^{-1/2} \ d\ell dw dr
\end{eqnarray*}
for \eqref{VP}, and 
\begin{eqnarray*}
\mcE_{\mathrm{RVP}} & = &\int_0^\infty \int_{-\infty}^\infty \int_0^\infty \sqrt{1+w^2 + \ell r^{-2}} f(t,r,w,\ell) \ell^{-1/2} \ d\ell dw dr\\
& \ & \quad  + \frac{1}{2}\int_0^\infty \int_{-\infty}^\infty \int_0^\infty \mcU(t,r) f(t,r,w,\ell) \ell^{-1/2} \ d\ell dw dr
\end{eqnarray*}
for \eqref{RVP}. Though the sum of the kinetic and potential energies balances at all times, we will show that each quantity actually tends to infinity with rate $\mathcal{O}(\ln(t))$ as $t \to \infty$ (see also \cite{Dolbeault}, \cite{DolRein}).
This feature will be a crucial mechanism in establishing rates for the large time behavior of the maximal position and  {momentum} on the support of the solution. Note that it is not \emph{a priori} obvious that the energies are, in fact, finite. However, we will assume (see \eqref{A} below) that the angular momenta of particles are uniformly bounded below on the support of the distribution function, and this condition ensures finite energy, as mentioned in the forthcoming Remark \ref{rek:integrability-of-energies}.
%

In the angular coordinates described above, the characteristics of the Vlasov equation also assume a reduced form.
In particular, for \eqref{VP} these are
\begin{equation}
\label{charang}
\left \{
\begin{aligned}
&\dot{\mcR}(s)=\mcW(s),\\
&\dot{\mcW}(s)= \frac{\mcL(s)}{\mcR(s)^3} + \frac{m(s, \mcR(s))}{2\pi\mcR(s)},\\
&\dot{\mcL}(s)= 0,
\end{aligned}
\right.
\end{equation}
while for \eqref{RVP} they are
\begin{equation}
\label{charang-rel}
\left \{
\begin{aligned}
&\dot{\mcR}(s)=\frac{\mcW(s)}{\sqrt{1+\mcW(s)^2+\mcL(s)\mcR(s)^{-2}}},\\
&\dot{\mcW}(s)= \frac{\mcL(s)\mcR(s)^{-3}}{\sqrt{1+\mcW(s)^2+\mcL(s)\mcR(s)^{-2}}} + \frac{m(s, \mcR(s))}{2\pi\mcR(s)},\\
&\dot{\mcL}(s)= 0.
\end{aligned}
\right.
\end{equation}
We will study forward characteristics of these systems with initial conditions
\begin{equation}
\label{charanginit}
\mcR(0) = r, \qquad \mcW(0) = w, \qquad \mcL(0) = \ell
\end{equation}
and note that the traditional convention for notation has been shortened so that
$$\mcR(s) = \mcR(s,0,r,w,\ell), \qquad \mcW(s) = \mcW(s,0,r,w,\ell), \qquad \mcL(s) = \mcL(s,0,r,w,\ell).$$
Additionally, because the angular momentum of particles is conserved in time on the support of $f(t)$, we note that $\mcL(s) = \ell$ for every $s \geq 0$.

Though we will focus on two-dimensional problems, it is useful to note that such solutions can also satisfy analogous three-dimensional problems. In particular, if one prescribes initial data for \eqref{VP} or \eqref{RVP} with $x,v\in \bfR^3$ that is cylindrically symmetric \cite{Jackcyl}, independent of $x_3$, and possesses a Dirac delta dependence on $v_3$, then any solution of \eqref{vlasovang} or \eqref{vlasovang-rel}, respectively, will automatically satisfy these equations in the sense of distributions.

In establishing the forthcoming results, we face some challenging issues.
As we will show, the methods used to understand the behavior of solutions in three-dimensions cannot obtain sharp rates for the two-dimensional \eqref{VP} problem, as the energy transfer in the latter system is the driving force for the behavior of characteristics. 
An additional issue is that the supremum of the field decays very slowly in time,
and thus no convergence of  {momentum} characteristics can be expected.

In order to precisely state the main results, we first define notation for the (interior) support of $f$ and the maximal particle position and radial momentum on this set. For every $t \geq 0$, define
$$S(t) = \left \{ (r, w, \ell) : f(t, r, w, \ell) > 0 \right \},$$
as well as
$$\mfR(t) =\sup_{(r, w, \ell) \in \bS(0)} \mcR(t, 0, r, w, \ell),$$
and
$$\mfW(t) =\sup_{(r, w, \ell) \in \bS(0)} \left | \mcW(t, 0, r, w, \ell) \right |.$$
We further define the projection
$$\pi_r(\bS(t)) = \{r  : (r,w,\ell) \in \bS(t)\}$$
with analogous notation for $\pi_w$ and $\pi_\ell$.
Additionally, we use the notation
$A(t) \lesssim B(t)$
to mean that there is $C > 0$ {, independent of $t$,} such that
$$ A(t) \leq C B(t)$$%
for all $t$ sufficiently large
with an analogous definition for $\gtrsim$,
and
$A(t) \sim B(t)$
to mean 
$$B(t) \lesssim A(t) \lesssim B(t).$$
Throughout we will assume that all particles possess some positive angular momentum on the support of $f$, namely there is $C> 0$ such that
\begin{equation}
\label{A}
\tag{A}
\inf \pi_\ell(\bS(0)) = \inf \{ \ell : (r,w,\ell) \in \bS(0)\} \geq C.
\end{equation}
\begin{remark}\label{rek:integrability-of-energies}
Note that the compact support of $f_0$ and  \eqref{A} guarantee that the potential and kinetic energies are finite for both \eqref{VP} and \eqref{RVP}. Indeed, these energies involve the term $\ell^{-1/2}$, and the kinetic energy has the term $r^{-2}$ in addition. The compact support of $f_0$ guarantees that the support of $f(t)$ remains compact for all times so that there are no issues of integrability  at infinity. Moreover,  \eqref{A} guarantees that the support of $f_0$ is bounded away from both $r=0$ and $\ell=0$, and this remains true for the support of $f(t)$ at later times.
\end{remark}

With this, we prove decay rates for the field, as well as sharp growth rates for the maximal particle momenta and positions for each system.
Here, the leading order dynamics of the particle characteristics for \eqref{VP} are not driven merely by their angular momentum, which is the case for the 3D spherically-symmetric problem, but also by the transfer from potential energy to kinetic energy.
Furthermore, the asymptotic rates attained by solutions of \eqref{VP} and \eqref{RVP} differ from one another, unlike in the three-dimensional case.
\begin{theorem}
\label{T1}
Let $f_0 \in C^1(\mathbb{R}^4)$ be nontrivial and radially-symmetric with compact support satisfying \eqref{A}, and let $p \in (2,\infty]$. Then, for any solution of \eqref{VP}, we have
$$ \begin{gathered}
\mfW(t) \sim\sqrt{\ln(t)},\\
\mfR(t) \sim t\sqrt{\ln(t)},\\
\Vert \mcU(t) \Vert_\infty \sim \ln(t),\\
\end{gathered}$$
as well as the field and density estimates
$$ \begin{gathered}
\left (t \sqrt{\ln(t)} \right)^{-1 + \frac{2}{p}} \lesssim \Vert E(t) \Vert_p \lesssim t^{-1 + \frac{2}{p}}, \\
\left (t^2\ln(t) \right)^{-1} \lesssim \Vert \rho(t) \Vert_\infty \lesssim t^{-1},
\end{gathered}$$
and the pointwise estimates
$$ \begin{gathered}
0 \lesssim \mcW(t, 0, r, w, \ell)  \lesssim \sqrt{\ln(t)},\\
t \lesssim \mcR(t, 0, r, w, \ell) \lesssim t\sqrt{\ln(t)}\\
\end{gathered}
$$
for $(r,w,\ell) \in S(0)$.
\end{theorem}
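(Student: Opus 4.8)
The plan is to reduce everything to the single-characteristic system \eqref{charang}, for which two elementary structural facts drive the analysis: the radial force $\tfrac{\ell}{\mcR^3}+\tfrac{m(s,\mcR)}{2\pi\mcR}$ is nonnegative, so $\mcW$ is nondecreasing along every characteristic, and $0\le m\le\mcM$. First I would record the single-particle energy $e(s)=\tfrac12\mcW(s)^2+\tfrac{\ell}{2\mcR(s)^2}+\mcU(s,\mcR(s))$ and, using $-\partial_r\mcU=m/(2\pi r)$ from \eqref{fieldang}, verify that all the force terms cancel so that $\dot e=\partial_t\mcU(s,\mcR(s))$; this identity is the engine of the argument. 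As a warm-up I would extract crude bounds: condition \eqref{A} supplies a centrifugal barrier $\ge C\mcR^{-3}$ that prevents any characteristic from reaching the origin and forces $\mcW$ to turn positive within a time $T_0$ uniform over the compact set $\bS(0)$; since $\mcW$ is nondecreasing this gives $\mcW(s)\ge c>0$ for $s\ge T_0+1$ and hence the linear lower bound $\mcR(s)\gtrsim s$. Feeding $\mcR\gtrsim s$ and $m\le\mcM$ back into \eqref{charang} yields the crude upper bounds $\mcW\lesssim\ln s$ and $\mcR\lesssim s\ln s$, which, because the potential depends only logarithmically on the radial scale, are already enough to pin $\|\mcU(s)\|_\infty\sim\ln s$ and, more precisely, $-\mcU(s,\mcR(s))=\tfrac{\mcM}{2\pi}\ln s+O(\ln\ln s)$ uniformly on the support (the variation of $\mcU$ across the support is only $O(\ln\ln s)$ by \eqref{U} and $m\le\mcM$).

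The heart of the proof — and the step I expect to be the main obstacle — is the sharp upper bound $\mfW(t)\lesssim\sqrt{\ln t}$, since integrating the force naively produces only the too-weak rate $\ln t$. The resolution is to control $e(s)$ through the explicit computation, using the continuity equation for $\rho$ and \eqref{massang}--\eqref{U}, that
\[
\partial_t\mcU(t,r)=-\int_r^\infty j(t,q)\,dq,\qquad j(t,q)=\tfrac1q\int\!\!\int w\,f(t,q,w,\ell)\,\ell^{-1/2}\,dw\,d\ell.
\]
Once $s\ge T_0$ every characteristic has $\mcW>0$, so $j\ge0$ on the support and therefore $\partial_t\mcU\le0$; consequently $e$ is nonincreasing for $s\ge T_0$, while the excursion over the bounded interval $[0,T_0]$ contributes only an $O(1)$ amount. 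Thus $e(s)\lesssim1$ for all $s$, and dropping the nonnegative centrifugal term gives $\tfrac12\mcW(s)^2\le e(s)-\mcU(s,\mcR(s))\lesssim\ln s$, i.e. the sought-after $\mcW\lesssim\sqrt{\ln s}$ uniformly on $\bS(0)$. Integrating $\dot{\mcR}=\mcW$ then upgrades this to $\mcR(t)\lesssim t\sqrt{\ln t}$, giving the pointwise upper bounds and $\mfR(t)\lesssim t\sqrt{\ln t}$.

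For the matching lower bounds I would work with an outermost characteristic, i.e. one realizing (nearly) the supremum $\mfR(t)$. Such a characteristic sees all of the mass inside it, so $m(s,\mcR(s))=\mcM$ and no current lies beyond it, whence $\partial_t\mcU=0$ and $e$ is exactly conserved; combined with $-\mcU(s,\mcR(s))\gtrsim\ln s$ this forces $\tfrac12\mcW(s)^2=e(0)-\tfrac{\ell}{2\mcR^2}-\mcU(s,\mcR)\gtrsim\ln s$, so $\mfW(t)\gtrsim\mcW(s)\gtrsim\sqrt{\ln t}$, closing $\mfW\sim\sqrt{\ln t}$. Integrating $\dot{\mcR}=\mcW\gtrsim\sqrt{\ln s}$ along this same characteristic and using $\mcR\gtrsim s$ gives $\mcR(t)\gtrsim t\sqrt{\ln t}$, so $\mfR\sim t\sqrt{\ln t}$.

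Finally, the remaining estimates are downstream consequences of the two rates above together with the support localization $t\lesssim\mcR\lesssim t\sqrt{\ln t}$. The relation $-\mcU(s,\mcR)\sim\tfrac{\mcM}{2\pi}\ln s$ gives $\|\mcU(t)\|_\infty\sim\ln t$ directly and, since $\mcU$ is essentially constant across the support, shows the potential part of $\mcE_{\mathrm{VP}}\sim-\ln t$; energy conservation then forces the kinetic part $\sim\ln t$. For the field I would use $|E(t,x)|=m(t,r)/(2\pi r)$ from \eqref{fieldang} with $0\le m\le\mcM$ and the fact that the integrand of $\|E(t)\|_p^p$ is supported in $r\gtrsim t$ and equals $(\mcM/2\pi r)^p$ for $r\gtrsim t\sqrt{\ln t}$; the resulting radial integrals (convergent precisely because $p>2$) produce the stated two-sided bound. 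The density lower bound is immediate from mass conservation spread over a disk of radius $\mfR\sim t\sqrt{\ln t}$, while the upper bound $\|\rho(t)\|_\infty\lesssim t^{-1}$ follows from \eqref{rhoang} and $r\gtrsim t$ once the column mass $\int\!\!\int f\ell^{-1/2}\,dw\,d\ell$ is shown to stay bounded; for the latter I would invoke Liouville's theorem, noting that \eqref{charang} preserves $dr\,dw$ at each fixed $\ell$, which controls the concentration that the $w$-stretching of the support might otherwise create.
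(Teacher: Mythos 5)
Your upper-bound machinery is exactly the paper's: the single-particle energy $e(s)=\tfrac12(\mcW(s)^2+\ell\mcR(s)^{-2})+\mcU(s,\mcR(s))$ satisfies $\dot e=\partial_t\mcU\le 0$ once every momentum on the support has turned positive, and combined with $-\mcU(s,\mcR(s))\lesssim\ln s$ this gives $\mcW\lesssim\sqrt{\ln s}$ and $\mcR\lesssim s\sqrt{\ln s}$; your preliminary characteristic, field, and potential estimates likewise track Lemmas \ref{L1}--\ref{lem:U-decay}. The genuine gaps are in your lower bounds. First, the ``outermost characteristic'' does not exist as a single trajectory: radial characteristics with different $(w,\ell)$ can overtake one another, so the characteristic realizing $\mfR(t)$ at a fixed large $t$ need not have been outermost at earlier times, and you cannot assert $m(s,\mcR(s))=\mcM$ or $\partial_t\mcU(s,\mcR(s))=0$ along it for all $s$. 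Monotonicity only gives $e(s)\le e(T_0)$, which is the wrong direction for bounding $\mcW$ from below. The conclusion $\mfW(t)\gtrsim\sqrt{\ln t}$ is still true, but the proof must go through \emph{global} energy conservation (which you mention only in passing at the end): the potential part of $\mcE_{\mathrm{VP}}$ tends to $-\infty$ like $\ln t$, so the kinetic energy grows like $\ln t$, and the kinetic energy is at most $C\,(\mfW(t)^2+t^{-2})$. Second, and more seriously, your derivation of $\mfR(t)\gtrsim t\sqrt{\ln t}$ integrates $\dot\mcR=\mcW\gtrsim\sqrt{\ln s}$ ``along this same characteristic.'' The kinetic-energy bound only shows that at each time \emph{some} characteristic has momentum $\gtrsim\sqrt{\ln s}$ --- possibly a different one at each time --- and the paper explicitly leaves open whether any fixed characteristic obeys the pointwise lower bound. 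The paper's substitute is the virial identity
$$\frac{d^2}{dt^2}\Bigl(\frac12\int r^2 f\,\ell^{-1/2}\,d\ell\,dw\,dr\Bigr)=\int\Bigl(w^2+\ell r^{-2}+\tfrac{1}{2\pi}m(t,r)\Bigr)f\,\ell^{-1/2}\,d\ell\,dw\,dr\gtrsim\ln t,$$
which after two integrations yields $t^2\ln t\lesssim\int r^2 f\lesssim\mcM\,\mfR(t)^2$. Without this (or an equivalent averaged argument) the sharp lower bound on $\mfR$ is not established.

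A smaller but still real gap is the density upper bound. Preservation of phase-space volume does not control the mass of the slice $\{(w,\ell):f(t,r,w,\ell)\neq0\}$ at \emph{fixed} $r$; Liouville's theorem is perfectly consistent with that slice concentrating. What is actually needed, and what the paper proves, is that the $w$-support at fixed $(r,\ell)$ has uniformly bounded diameter: integrating backward to time $2$, two characteristics ending at the same $(r,\ell)$ with momenta $w_1,w_2$ satisfy $|\mcR(2,t,r,w_1,\ell)-\mcR(2,t,r,w_2,\ell)|\ge|w_1-w_2|(t-2)-Ct$, and since the left-hand side is bounded by the compact support at time $2$, one gets $|w_1-w_2|\lesssim1$. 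Combined with $r\gtrsim t$ and assumption \eqref{A}, this gives $\|\rho(t)\|_\infty\lesssim t^{-1}$; you should replace the Liouville appeal with this spreading estimate.
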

\begin{theorem}
\label{T2}
Let $f_0 \in C^1(\mathbb{R}^4)$ be nontrivial and radially-symmetric with compact support satisfying \eqref{A}, and let $p \in (2,\infty]$. Then, for any solution of \eqref{RVP} we have
$$ \begin{gathered}
\mfW(t) \sim \ln(t),\\
\mfR(t) \sim t,\\
\Vert \mcU(t) \Vert_\infty \sim \ln(t),
\end{gathered}$$
as well as the field and density estimates
$$ \begin{gathered}
\Vert E(t) \Vert_p \sim t^{-1 + \frac{2}{p}},\\
t^{-2} \lesssim \Vert \rho(t) \Vert_\infty \lesssim t^{-1}
\end{gathered}$$
and the pointwise estimates
$$ \begin{gathered}
0 \lesssim \mcW(t, 0, r, w, \ell)  \lesssim \ln(t),\\
\mcR(t, 0, r, w, \ell) \sim t
\end{gathered}
$$
for $(r,w,\ell) \in S(0)$.
\end{theorem}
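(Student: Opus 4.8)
The plan is to work entirely at the level of the relativistic characteristic system \eqref{charang-rel} and to exploit two structural features absent in the classical case: the radial speed is bounded, $|\dot\mcR| = |\mcW|/\sqrt{1+\mcW^2+\mcL\mcR^{-2}} < 1$, and the radial force $\dot\mcW$ is a sum of two nonnegative terms. The first already gives $\mcR(s)\leq r+s$, hence $\mfR(t)\lesssim t$ and the upper halves of the pointwise estimates for free. The backbone will be the \emph{pointwise} statement $\mcR(t,0,r,w,\ell)\sim t$: once the support of $f(t)$ is pinned to an annulus of radius comparable to $t$, the potential, field, and density bounds follow by direct computation, and the momentum rate is read off from energy conservation. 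I would therefore proceed in the order: pointwise $\mcR\sim t$, then $\mfW\lesssim\ln t$, then the energy identity forcing $\mfW\gtrsim\ln t$ and $\|\mcU\|_\infty\sim\ln t$, and finally the field and density estimates.

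For the lower bound $\mcR\gtrsim t$ I argue along a single characteristic, uniformly over $\bS(0)$. Since $\dot\mcW\geq 0$, the radial momentum $\mcW(s)$ is nondecreasing, so each trajectory has at most one turning point $s_*$ (where $\mcW=0$) and moves outward afterward. A direct computation gives $\frac{d}{ds}\sqrt{1+\mcW^2+\mcL\mcR^{-2}} = \frac{m(s,\mcR)}{2\pi\mcR}\dot\mcR$, which is $\leq 0$ while the particle moves inward; with \eqref{A} this yields a uniform centrifugal barrier $\mcR(s)\geq c_0>0$ and a uniform bound on the inward relativistic energy $E_0$, so $\dot\mcW$ is bounded below during the inward phase and the turning time $s_*$ is uniformly bounded. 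Just past $s_*$ the angular term pushes $\mcW$ up to a uniform positive value $c_1$, and since $s\mapsto s/\sqrt{1+s^2+K}$ is increasing, the bounds $\mcW\geq c_1$, $\mcR\geq c_0$, $\mcL\leq\sup\pi_\ell(\bS(0))$ produce a uniform floor $\hat\mcW = \dot\mcR\geq\gamma>0$ for all large $s$. Integrating gives $\mcR(t)\geq\tfrac{\gamma}{2}t$, and with $\mcR\leq r+t$ this yields $\mcR\sim t$ and $\mfR(t)\sim t$.

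With $\mcR\gtrsim s$ the upper bound on momentum is immediate: estimating $\dot\mcW\leq\sqrt{\mcL}\,\mcR^{-2}+\tfrac{\mcM}{2\pi\mcR}\lesssim s^{-2}+s^{-1}$ and integrating gives $\mcW(t)\lesssim\ln t$, hence $\mfW(t)\lesssim\ln t$. The matching lower bound is the conceptual crux, where the energy-transfer mechanism enters. Using \eqref{U} and the now-established fact that the support lies in $r\gtrsim t$, the spatially constant term $-\int_0^\infty\rho\,q\ln q\,dq$ dominates and forces $\mcU(t,r)\lesssim -\ln t$ on the support; integrating against $f\ell^{-1/2}$ gives potential energy $\lesssim -\ln t$, so conservation of $\mcERVP$ forces the relativistic kinetic energy $\gtrsim\ln t$. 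Bounding that kinetic energy above by $\tfrac{\mcM}{2\pi}\bigl(1+\mfW(t)+\mathcal{O}(1/t)\bigr)$, via $\sqrt{1+w^2+\ell r^{-2}}\leq 1+|w|+\sqrt{\ell}/r$ and $r\gtrsim t$, then gives $\mfW(t)\gtrsim\ln t$, completing $\mfW\sim\ln t$; the same two-sided control of $\mcU$ on the support yields $\|\mcU(t)\|_\infty\sim\ln t$.

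Finally, the field and density estimates follow from the annulus structure. Since $|E(t,x)| = m(t,r)/2\pi r$ with $m=\mcM$ beyond the support and $m=0$ below it, and both edges of the support sit at radius $\sim t$, the point-charge tail $\int_{\mfR(t)}^\infty(\mcM/2\pi r)^p\,2\pi r\,dr\sim t^{2-p}$ controls $\|E(t)\|_p$ from both sides, giving $\|E(t)\|_p\sim t^{-1+2/p}$. For the density, the lower bound $\|\rho(t)\|_\infty\gtrsim t^{-2}$ is a pigeonhole estimate, as the conserved mass $\mcM$ occupies an annulus of area $\sim t^2$. The upper bound $\|\rho(t)\|_\infty\lesssim t^{-1}$ is the step I expect to fight hardest: writing $\rho = \tfrac1r\int\int f\ell^{-1/2}\,dw\,d\ell$ and using $r\gtrsim t$, one must show the velocity integral stays $\mathcal{O}(1)$, i.e. that the $w$-extent of $\bS(t)$ at each fixed $r$ is bounded. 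The crude bound $\mcW\lesssim\ln t$ only gives $\mathcal{O}(\ln t)$ here, so a finer analysis of the characteristic flow is needed—exploiting area preservation (the reduced characteristic field in $(r,w)$ is divergence free, so the flow preserves $dr\,dw$) together with the monotonicity $\dot\mcW\geq 0$ to rule out the vertical folding of $\bS(t)$ that would concentrate the marginal—in order to remove the spurious logarithm.
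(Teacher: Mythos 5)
Most of your architecture matches the paper's: bounded relativistic speed for $\mfR(t)\lesssim t$, the field bound $\|E(t)\|_\infty\lesssim t^{-1}$ integrated along characteristics for $\mfW(t)\lesssim\ln t$, the logarithmic growth of $-\mcU$ on the support fed into energy conservation for $\mfW(t)\gtrsim\ln t$, and the annulus structure for the field and the density lower bound. Your route to the pointwise lower bound $\mcR(t)\gtrsim t$ is genuinely different: you run a dynamical argument (uniform centrifugal barrier from \eqref{A}, uniformly bounded turn-around time, then a uniform floor on $\hat{\mcW}=\dot{\mcR}$ after the turn), whereas the paper integrates the convexity inequality $\frac12\frac{d^2}{dt^2}(\mcR^2)\geq\frac{\mcW^2+\ell\mcR^{-2}}{1+\mcW^2+\ell\mcR^{-2}}$ twice and minimizes over $t$ to get the explicit constant $\ell r^{-2}/(1+w^2+\ell r^{-2})$. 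Both work; the paper's version is cleaner to make uniform over $S(0)$, but your argument is sound as sketched.

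The genuine gap is the density upper bound $\|\rho(t)\|_\infty\lesssim t^{-1}$, which you explicitly leave open. You correctly diagnose that $r\gtrsim t$ plus the crude momentum bound only yields $t^{-1}\ln t$, and that one must show the $w$-extent of $S(t)$ at each fixed $(r,\ell)$ is $O(1)$; but the mechanism you propose --- area preservation of the reduced flow in $(r,w)$ combined with $\dot{\mcW}\geq0$ --- does not by itself control the one-dimensional slice $\{w: f(t,r,w,\ell)\neq0\}$: measure preservation constrains two-dimensional areas, and a thin set of large $w$-diameter at fixed $r$ is perfectly compatible with both properties. The paper's actual argument is a backwards-characteristics (dispersive) estimate: for two characteristics arriving at the same $(r,\ell)$ at time $t$ with momenta $w_1,w_2$, integrate \eqref{charang-rel} back to the fixed time $\tau=2$ and use the field decay $\tau^{-1}$ to get $|\mcR(2,t,r,w_1,\ell)-\mcR(2,t,r,w_2,\ell)|\geq|\hat w_1-\hat w_2|(t-2)-Ct$; since the left side is bounded by the diameter of the (compact) support at time $2$, this forces $|\hat w_1-\hat w_2|\lesssim1$ with a constant small enough to translate into $|w_1-w_2|\lesssim1$, and hence $\rho(t,r)\leq Cr^{-1}\lesssim t^{-1}$. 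Some version of this free-streaming separation argument is needed to close your proof; without it the stated rate $\|\rho(t)\|_\infty\lesssim t^{-1}$ is not established.
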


The reader will note that we do not obtain sharp \emph{pointwise} estimates of the positions and momenta, with the exception of $\mcR(t)$ in Theorem \ref{T2}. Hence, it is possible that not all particles asymptotically disperse at the same rate. Indeed, equations \eqref{V2} and \eqref{eq:R''-rel}, which will be presented later, show that particles which continually experience a nontrivial force (i.e., $m(t,\mcR(t)) \geq C > 0$) will disperse at the greater asymptotic rates, while it is possible that those particles which experience arbitrarily small forces will instead have positions and momenta that grow at lesser rates.
For this reason, it remains an open problem to either demonstrate the multiple asymptotic dynamics of characteristics or obtain the sharp lower bound.
%
Additionally, we note that the contribution of the electric field  {can} dominate the influence of dispersive effects in the asymptotic behavior of characteristics. 
Indeed, for characteristics satisfying $m(t,\mcR(t)) \gtrsim 1$, we have
$$ \left | \mcW(t,\tau, r,w, \ell) - w \right | = \int_\tau^t \left ( \frac{m(s,\mcR(s))}{\mcR(s)} + \ell \mcR(s)^{-3} \right ) \ ds \geq C\int_\tau^t \left (s \sqrt{\ln(s)} \right)^{-1}  ds \gtrsim \sqrt{\ln(t)}$$
for \eqref{charang} and similarly
$$ \left | \mcW(t,\tau, r,w, \ell) - w \right |  \geq C\int_\tau^t s^{-1}  ds \gtrsim \ln(t)$$
for \eqref{charang-rel}
by taking $\tau$ sufficiently large and $(r,w,\ell) \in S(\tau)$.
Thus, either system may contain particles such that $\mcR(t)$ and $\mcW(t)$ ultimately ``forget'' their values at any previous time,
and  {these momentum} characteristics cannot converge as $t \to \infty$.
An analogous calculation for characteristics of \eqref{VP} further yields
$$\left | \mcR(t, \tau, r, w, \ell) - \left ( r + w t\right ) \right | \gtrsim t \sqrt{\ln(t)}$$
for $\tau$ sufficiently large and $(r,w,\ell) \in S(\tau)$.
 {In particular, we note that the distance between the spatial characteristics and their free-streaming counterparts is growing faster than the free-streaming trajectories themselves as $t \to \infty$.}
Therefore, unlike solutions of the three-dimensional problem \cite{Pausader, SP, SPnew}  {in which this difference is lower order},   {it is not clear if one can obtain} modified convergence of the particle distribution or its spatial average as $t \to \infty$.  {We therefore present this as an interesting open problem that remains elusive, though we conjecture that the distribution of the angular momentum, which is time-independent, is the only microscopic information retained in the time-asymptotic limit.}\\

The paper proceeds as follows. 
The proofs of Theorems~\ref{T1} and \ref{T2} are contained within Section~\ref{thmproof}.
Section~\ref{estimates1} is devoted to obtaining preliminary estimates for the particle characteristics, potential, and electric field, some of which are further improved in Section~\ref{estimates2} using the growth of the kinetic energy.
The charge density is then estimated in Section~\ref{density}.
Within the proofs we inherently assume $f_0 \in C_c^1(\bfR^4)$ is nontrivial and radially symmetric, and note that $C$ will represent a constant that may change from line to line, but when necessary to denote a certain constant, we will distinguish this value with a subscript, e.g. $C_0$.  {As mentioned in the discussion, all theorems herein pertain only to the large time behavior of solutions.}

\section{Estimates of the Characteristics, the Field, and the Potential}
\label{estimates1}
In this section, we state and prove a variety of lemmas concerning the behavior of particle characteristics, the potential, and the electric field.   {An important quantity here and in the sequel shall be $w^2+\ell r^{-2}$, which is simply the representation of $|v|^2$ in the aforementioned coordinates (see \eqref{eq:expression-for-v} in  Appendix \ref{appendix} and the surrounding discussion).}

\subsection{Behavior of Characteristics}
We first study the behavior of the characteristics \eqref{charang} and \eqref{charang-rel} corresponding to the classical and relativistic systems, respectively.
Some of the ideas here are derived from the three-dimensional problem with spherical symmetry \cite{BCP1, BCP2, Horst, SP}.
 {The repulsive force is crucial for our methods, as it guarantees that particles only experience forces that push them away from the origin.
Indeed, in the attractive case, steady state solutions are known to exist, and the particles need not disperse.}

\begin{lemma}
\label{L1}
Let $r, \ell > 0$ and $w \in \mathbb{R}$ be given, and let $(\mcR(t), \mcW(t), \ell)$ satisfy either \eqref{charang} or \eqref{charang-rel} for  all $t \geq 0$, with initial conditions as in  \eqref{charanginit}.
Then, we have the following:
\begin{enumerate}
\item For solutions of \eqref{charang}
$$\mcR(t)^2 \geq \ell r^{-2}t^2$$
for every $t \geq 0$, while for solutions of \eqref{charang-rel}
$$\mcR(t)^2 \geq \frac{\ell r^{-2}}{1 + w^2 + \ell r^{-2}}t^2$$
for every $t \geq 0$.

\item There is $C > 0$ such that for any $(r,w,\ell) \in S(0)$, we have
$$\mcR(t)^2 \geq Ct^2$$
for every $t \geq 0$.

\item There exists a ``turn-around time'' $T= T(r, w, \ell) \geq 0$ such that
$$\mcW(t) = \dot{\mcR}(t) > 0$$
for all $t \in (T, \infty)$.
Furthermore, for both solutions of  \eqref{charang} and \eqref{charang-rel} it holds that $T = 0$ if $w \geq 0$. If $w<0$, then  for solutions of \eqref{charang}
$$ 0 < T \leq \frac{|w| r^3}{\ell},$$
while for solutions of \eqref{charang-rel}
$$ 0 < T \leq \frac{|w| r^3\sqrt{1+w^2+\ell r^{-2}}}{\ell}.$$
\end{enumerate}
\end{lemma}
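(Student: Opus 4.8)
The plan is to prove the three assertions in the order (iii), (i), (ii): the turn-around statement is the most elementary and fixes the geometry used afterward, statement (i) rests on it, and (ii) is an immediate consequence of (i) together with the compact support of $f_0$ and hypothesis \eqref{A}.

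For (iii) the crucial point is that \emph{both} contributions to $\dot{\mcW}$ are nonnegative: the centrifugal term $\ell\mcR^{-3}$ (or its relativistic analogue) is strictly positive since $\ell>0$ and $\mcR>0$, while the field term $\tfrac{m}{2\pi\mcR}$ is nonnegative because $m\ge 0$. Hence $\dot{\mcW}>0$ for all $t$, so $\mcW$ is strictly increasing; once positive it stays positive, and $T$ is well defined as its unique zero, with $T=0$ when $w\ge 0$. For $w<0$ I would use that while $\mcW<0$ one has $\dot{\mcR}<0$, so $\mcR(t)\le r$ on $[0,T]$; for \eqref{charang} this gives $\dot{\mcW}\ge \ell\mcR^{-3}\ge \ell r^{-3}$, and integrating $\mcW(t)\ge w+\ell r^{-3}t$ forces a zero by $t=|w|r^3/\ell$. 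For \eqref{charang-rel} the denominator must also be controlled: writing $\Gamma=\sqrt{1+\mcW^2+\ell\mcR^{-2}}$, a short computation yields $\dot\Gamma=\tfrac{m}{2\pi\mcR}\dot{\mcR}$, which is $\le 0$ on $[0,T]$, so $\Gamma\le \Gamma(0)=\sqrt{1+w^2+\ell r^{-2}}$ there; together with $\mcR\le r$ this gives $\dot{\mcW}\ge \ell r^{-3}\Gamma(0)^{-1}$ and the stated bound on $T$.

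For (i) I would pass to Cartesian characteristics $X,V$ for the central-force system $\dot X=V$ (resp. $\dot X=\hat V=V/\Gamma$), $\dot V=E=\lambda X$ with $\lambda=\tfrac{m(\cdot,|X|)}{2\pi|X|^2}\ge 0$, using that the angular momentum $X\wedge V$ is conserved with $|X\wedge V|=\sqrt\ell$. Choosing axes so that $X(0)=(r,0)$ and $V(0)=(w,\sqrt\ell/r)$, the classical case is clean: the transverse component satisfies the repulsive linear equation $\ddot X_2=\lambda X_2$ with $X_2(0)=0$, $\dot X_2(0)=\sqrt\ell/r$, so a bootstrap (while $X_2\ge0$ one has $\ddot X_2\ge0$, hence $\dot X_2\ge\dot X_2(0)$) gives $X_2(t)\ge \tfrac{\sqrt\ell}{r}t$ and therefore $\mcR(t)\ge X_2(t)\ge \tfrac{\sqrt\ell}{r}t$, which is exactly $\mcR^2\ge \ell r^{-2}t^2$. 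In the relativistic case the stated constant is $\Gamma(0)^{-2}\ell r^{-2}=|\hat V_2(0)|^2$, and since the transverse displacement is sublinear I would instead work with $\mcR^2$ directly. Because $E$ is radial and outward, $X\cdot\ddot X\ge0$, so $\tfrac{d^2}{dt^2}\mcR^2=2|\dot X|^2+2X\cdot\ddot X\ge 2|\dot X|^2=2(1-\Gamma^{-2})$; from $\dot\Gamma=\tfrac{m}{2\pi\mcR}\dot{\mcR}$ the factor $\Gamma$ attains its minimum at the turn-around time, where $\Gamma^2=1+\ell\mcR(T)^{-2}\ge 1+\ell r^{-2}$, so $1-\Gamma^{-2}\ge |\hat V_2(0)|^2$ for all $t$ and $\mcR^2-|\hat V_2(0)|^2t^2$ is convex. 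When $w\ge 0$ its initial slope is nonnegative, so it stays above its value $r^2>0$ at $t=0$ and the bound follows at once.

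The step I expect to be the main obstacle is the relativistic bound in (i) when $w<0$: here the convex function $\mcR^2-|\hat V_2(0)|^2t^2$ has negative initial slope, and convexity together with positivity at $t=0$ does not by itself exclude a negative excursion near its minimum. To close this I would compare $\mcR$ with the relativistic free-streaming trajectory $X(0)+\hat V(0)t$, whose squared radius equals $(r+\hat V_1(0)t)^2+|\hat V_2(0)|^2t^2\ge |\hat V_2(0)|^2t^2$, and establish $\mcR(t)\ge |X(0)+\hat V(0)t|$; for $w<0$ this requires a dedicated analysis on $[0,T]$ exploiting the monotonicity of $\Gamma$ and the turn-around bound from (iii). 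Finally, (ii) is immediate from (i): on $S(0)$ the compact support of $f_0$ bounds $r$ from above and, with \eqref{A}, bounds $\ell$ from below and $\Gamma(0)$ from above, so $\ell r^{-2}$ (resp. $\Gamma(0)^{-2}\ell r^{-2}$) is bounded below by a positive constant $C$.
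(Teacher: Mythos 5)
Your part (iii) is essentially the paper's argument, and your classical part (i) takes a genuinely different route that works: the paper never passes to Cartesian coordinates, instead using $\tfrac{d^2}{dt^2}(\mcR^2)\ge 2(\mcW^2+\ell\mcR^{-2})$ together with the monotonicity of $\mcW^2+\ell\mcR^{-2}$ after the turn-around time, which forces a separate optimization argument when $w<0$; your transverse-component bootstrap ($\ddot X_2=\lambda X_2\ge0$ while $X_2\ge0$, hence $X_2(t)\ge \sqrt{\ell}\,r^{-1}t$ and $\mcR(t)\ge X_2(t)$) handles both signs of $w$ uniformly and is a clean alternative, provided you note that the lift from \eqref{charang} to a planar characteristic with $X(0)=(r,0)$, $V(0)=(w,\sqrt{\ell}/r)$ is justified by uniqueness of the reduced ODE.

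The genuine gap is exactly where you flag it: the relativistic bound in (i) for $w<0$. Your proposed repair, proving $\mcR(t)\ge|X(0)+\hat V(0)t|$, is not carried out and is unlikely to close cleanly: on $[0,T]$ the Lorentz factor $\Gamma$ \emph{decreases} (as you yourself compute), so the second derivative of $|X(t)|^2-|X(0)+\hat V(0)t|^2$, which equals $2(|\hat V(t)|^2-|\hat V(0)|^2)+2X\cdot\ddot X$, has no favorable sign there, and convexity is lost precisely on the interval that causes the trouble. However, you already hold every ingredient of the paper's closing argument and do not need the free-streaming comparison. Since $\mcW^2+\ell\mcR^{-2}$ is minimized at the turn-around time $T_0$ with value $\mcV_0^2:=\ell\,\mcR(T_0)^{-2}$, inequality \eqref{eq:R''-rel} gives $\tfrac12\tfrac{d^2}{dt^2}(\mcR^2)\ge \hat\mcV_0^2$ for all $t\ge0$, where $\hat\mcV_0^2=\mcV_0^2/(1+\mcV_0^2)$; because $\tfrac{d}{dt}(\mcR^2)$ vanishes at $T_0$, integrating from $T_0$ in both directions yields
\begin{equation*}
\mcR(t)^2\;\ge\;\mcR(T_0)^2+\hat\mcV_0^2\,(t-T_0)^2\qquad\text{for all }t\ge0 .
\end{equation*}
Evaluating at $t=0$ gives $r^2\ge \mcR(T_0)^2+\hat\mcV_0^2T_0^2$, and minimizing the ratio $\bigl(\mcR(T_0)^2+\hat\mcV_0^2(t-T_0)^2\bigr)/t^2$ over $t>0$ gives the value $\mcR(T_0)^2\hat\mcV_0^2/\bigl(\mcR(T_0)^2+\hat\mcV_0^2T_0^2\bigr)\ge \mcR(T_0)^2\hat\mcV_0^2/r^2$. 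Combining this with $\mcR(T_0)^2\hat\mcV_0^2=\ell/(1+\mcV_0^2)\ge \ell/(1+w^2+\ell r^{-2})$ (valid since $\mcV_0^2\le w^2+\ell r^{-2}$ by the decrease of the rest energy on $[0,T_0]$) produces exactly the claimed bound $\mcR(t)^2\ge \ell r^{-2}(1+w^2+\ell r^{-2})^{-1}t^2$. Substituting this homogeneity-and-optimization step for your free-streaming comparison completes the proof; the rest of your proposal, including part (ii), is sound.
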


\begin{proof}
We first prove the result for  the characteristics \eqref{charang} of \eqref{VP} and then for the characteristics \eqref{charang-rel} of \eqref{RVP}.
To begin, we note the convexity of the spatial characteristics.  In particular, we find
\begin{equation}
\label{R2}
\frac{d^2}{dt^2} \left ( \mcR(t)^2 \right ) = 2 \left (\mcW(t)^2 + \ell \mcR(t)^{-2} \right ) + \frac{1}{\pi} m(t, \mcR(t)) \geq 2 \left (\mcW(t)^2 + \ell \mcR(t)^{-2} \right ).
\end{equation}
Similarly, the  {momentum} characteristics satisfy
\begin{equation}
\label{Winc}
\dot{\mcW}(t) \geq \ell \mcR(t)^{-3} > 0,
\end{equation}
and thus $\mcW(t)$ is increasing for $t \in [0,\infty).$
Finally, the square of the  {momentum} magnitude satisfies
\begin{equation}
\label{V2}
\frac{d}{dt} \left ( \mcW(t)^2 + \ell \mcR(t)^{-2} \right ) = \frac{m(t, \mcR(t))}{\pi\mcR(t)} \mcW(t).
\end{equation}

We first consider the case $w \geq 0$. Then, by \eqref{Winc} it follows that $\mcW(t) > w \geq 0$ for all $t \geq 0$.
The identity \eqref{V2} together with the positivity of the mass both imply that
$$\frac{d}{dt} \left ( \mcW(t)^2 + \ell \mcR(t)^{-2} \right ) \geq 0$$
for all $t \geq 0$, and because this function is increasing, \eqref{R2} then yields
$$\frac{d^2}{dt^2} \left ( \mcR(t)^2 \right ) \geq 2(w^2 + \ell r^{-2}).$$
Integrating in $t$ twice then implies
$$\mcR(t)^2 \geq r^2 + 2rwt + \left (w^2 + \ell r^{-2} \right )t^2 \geq \ell r^{-2} t^2$$
which provides the stated lower bound.

Now, instead consider the case $w < 0$. Then, define the ``turn-around'' time
	\begin{equation}\label{eq:turn-around}
	\Tm = \sup \{t \geq 0 : \mcW(t) \leq 0 \}
	\end{equation}
and note that $w < 0$ implies that $\Tm > 0$.
We first show that $\Tm < \infty$. For the sake of contradiction, assume $\Tm =\infty$. Then, we have $\dot{\mcR}(t) = \mcW(t) \leq 0$ for all $t \geq 0$ and thus $\mcR(t) \leq r$ for all $t \geq 0$.  From \eqref{charang} and the nonnegativity of the mass, we find
$$ \ddot{\mcR}(t)  = \frac{\ell}{\mcR(t)^3} + \frac{m(t, \mcR(t))}{2\pi\mcR(t)}\geq \ell \mcR(t)^{-3} \geq \ell r^{-3},$$
and upon integrating this yields
$$\mcW(t) \geq \ell r^{-3}t + w$$
for all $t \geq 0$.
Taking $t > \frac{-w r^3}{\ell }$ implies $\mcW(t) > 0$, thus contradicting the assumption that $\Tm = \infty$, and we conclude that $\Tm$ must be finite.
With this, the upper bound $T_0 \leq \frac{-w r^3}{\ell }$ follows, as well.

Since $\Tm < \infty$ and $\dot{\mcW}(t) > 0$ for all $t \geq 0$, we find
$$
\left \{
\begin{gathered}
\mcW(t)  < 0 \ \mathrm{for} \ t \in [0,\Tm),\\
\mcW(\Tm)  =0, \ \mathrm{and}\\
\mcW(t)  > 0 \ \mathrm{for} \ t \in (\Tm,\infty).
\end{gathered}
\right.$$
With this, we proceed as in the $w \geq 0$ case, but on the interval $[\Tm, \infty)$. In particular, \eqref{V2} shows that
$$\frac{d}{dt} \left ( \mcW(t)^2 + \ell \mcR(t)^{-2} \right ) \biggr \vert_{t = \Tm} = \frac{m(\Tm, \mcR(\Tm))}{\pi\mcR(\Tm)} \mcW(\Tm) = 0$$
and implies that $\mcW(t)^2 + \ell \mcR(t)^{-2}$ is minimized at $\Tm$, as the derivative changes from negative to positive at $t = \Tm$.
Thus, we define
$$\Rm^2 := \min_{t \geq 0} \mcR(t)^2 = \mcR(\Tm)^2$$
and
$$\Vm^2 := \min_{t \geq 0} \left ( \mcW(t)^2 + \ell \mcR(t)^{-2} \right )= \ell \mcR(\Tm)^{-2}.$$
The identity
\begin{equation}
\label{lrep}
\ell = \Rm^2 \Vm^2
\end{equation}
then follows immediately.

Now, using equation \eqref{R2} we find
$$\frac{d^2}{dt^2} ( \mcR(t)^2 ) \geq 2  \left ( \mcW(t)^2 + \ell \mcR(t)^{-2} \right ) \geq 2\Vm^2$$
for all $t \geq 0$.
Integrating twice in time yields
\begin{eqnarray*}
\mcR(t)^2 & \geq & \mcR(\Tm)^2 + 2\mcR(\Tm)\mcW(\Tm) (t - \Tm) + \Vm^2 (t - \Tm)^2\\
& = & \Rm^2 + \Vm^2 (t - \Tm)^2
\end{eqnarray*}
for any $t \geq 0$. In particular, evaluating this expression at $t = 0$ gives
\begin{equation}
\label{rgeq}
r^2 \geq \Rm^2 + \Vm^2 \Tm^2.
\end{equation}
Returning to the original lower bound for $\mcR(t)^2$, we divide by $t^2$ to find
$$\frac{\mcR(t)^2}{t^2} \geq \frac{\Rm^2 + \Vm^2 (t - \Tm)^2}{t^2}.$$
The right side of this inequality can then be minimized over all $t > 0$ and we find
$$\frac{\Rm^2 + \Vm^2 (t - \Tm)^2}{t^2} \geq \frac{\Rm^2 \Vm^2}{\Rm^2 + \Vm^2 \Tm^2},$$
which then yields the lower bound
\begin{equation}
\label{R2lower}
\mcR(t)^2 \geq \frac{\Rm^2 \Vm^2}{\Rm^2 + \Vm^2 \Tm^2} t^2.
\end{equation}
Using \eqref{lrep} with \eqref{rgeq} in \eqref{R2lower} yields
$$\mcR(t)^2 \geq \frac{\ell}{\Rm^2 + \Vm^2 \Tm^2} t^2 \geq \ell r^{-2}t^2$$
and the desired lower bound is again achieved.
Because this occurs in both cases, the proof of the first result for \eqref{VP} is complete.

The second result merely follows from the compactness of the set $S(0)$ and \eqref{A}.
Indeed, for $(r, w, \ell) \in S(0)$, there is $C > 0$ such that $r \leq C$, and using this lower bound within the first result yields
$$\mcR(t)^2 \geq C\ell t^2 \geq C t^2.$$
Finally, the last result merely follows from the previous argument with $T = 0$ if $w \geq 0$ and $T = \Tm$ if $w < 0$.

Next, we establish the stated results for solutions of \eqref{charang-rel} using similar methods. We first observe that in the relativistic case
	\begin{align}
	\frac12\frac{d^2}{dt^2}(\mcR(t)^2)
	&=
	\frac{\ell \mcR(t)^{-2}m(t,\mcR(t))}{2\pi\left(1+\mcW(t)^2+\ell\mcR(t)^{-2}\right)^{3/2}}
	+
	\frac{\ell\mcR(t)^{-2}+\mcW(t)^2}{1+\mcW(t)^2+\ell\mcR(t)^{-2}}
	+
	\frac{m(t,\mcR(t))}{2\pi\left(1+\mcW(t)^2+\ell\mcR(t)^{-2}\right)^{1/2}}\notag\\
	&\geq
	\frac{\ell\mcR(t)^{-2}+\mcW(t)^2}{1+\mcW(t)^2+\ell\mcR(t)^{-2}}
	\geq0\label{eq:R''-rel}
	\end{align}
and
	\begin{equation*}
	\dot{\mcW}(t)= \frac{\ell\mcR(t)^{-3}}{\sqrt{1+\mcW(t)^2+\ell\mcR(t)^{-2}}} + \frac{m(t, \mcR(t))}{2\pi\mcR(t)}>0.
	\end{equation*}
Therefore, $\mcW(t)$ is increasing for all $t\geq0$. Another essential quantity is the derivative of the rest  {momentum}, namely
	\begin{equation}\label{eq:change-in-rest-vel}
	\frac{d}{dt}\sqrt{1+\mcW(t)^2+\ell\mcR(t)^{-2}}
	=
	\frac{m(t,\mcR(t))\mcR(t)^{-1}}{\sqrt{1+\mcW(t)^2+\ell\mcR(t)^{-2}}} \mcW(t)
	\end{equation}
the sign of which depends on $\mcW(t)$.

We first consider the case $w\geq0$. Then, since $\dot\mcW(t))>0$, we know $\mcW(t)>w\geq0$ so that the derivative of the rest  {momentum} is nonnegative. This  implies
	\begin{equation*}
	\mcW(t)^2+\ell\mcR(t)^{-2}
	\geq
	w^2+\ell r^{-2}
	\end{equation*}
for all $t\geq0$. As a consequence, we deduce
	\begin{equation*}
	\frac{\mcW(t)^2+\ell\mcR(t)^{-2}}{1+\mcW(t)^2+\ell\mcR(t)^{-2}}
	\geq
	\frac{w^2+\ell r^{-2}}{1+w^2+\ell r^{-2}}
	>0
	\end{equation*}
for all $t\geq0$, which in view of \eqref{eq:R''-rel}, provides a lower bound for $\frac{d^2}{dt^2}(\mcR(t)^2)$ that is independent of $t$. Therefore, integrating twice in time and using the initial conditions yields
	\begin{align*}
	\mcR(t)^2
	&\geq
	\frac{w^2+\ell r^{-2}}{1+w^2+\ell r^{-2}}t^2+\frac{2rw}{\sqrt{1+w^2+\ell r^{-2}}} t+r^2.
	\end{align*}
This lower bound is a perfect square, which we write as
	\begin{align*}
	\mcR(t)^2
	\geq
	\frac{\ell r^{-2}}{1+w^2+\ell r^{-2}}t^2
	+
	\left(r+\frac{w}{\sqrt{1+w^2+\ell r^{-2}}}t\right)^2
	\geq
	\frac{\ell r^{-2}}{1+w^2+\ell r^{-2}}t^2,
	\end{align*}
and this is the desired lower bound for \eqref{RVP}. 

In the case $w < 0$, we again aim to show that the ``turn-around'' time $\Tm$ as defined in \eqref{eq:turn-around} is finite.
This is shown via contradiction as before; we outline the proof for completeness. If $\Tm =\infty$, then  $\mcW(t)\leq0$ for all $t\geq0$ so that $\mcR(t)\leq r$ for all $t\geq 0$, as $\dot\mcR(t)$ and $\mcW(t)$ have the same sign. From \eqref{eq:change-in-rest-vel} we find 
	\begin{equation*}
	\mcW(t)^2+\ell\mcR(t)^{-2}
	\leq
	w^2+\ell r^{-2}
	\end{equation*}
for all $t\geq0$. Using the expression \eqref{charang-rel} for $\dot\mcW(t)$ we have
	\begin{align*}
	\dot{\mcW}(t)
	&=
	\frac{\ell\mcR(t)^{-3}}{\sqrt{1+\mcW(t)^2+\ell\mcR(t)^{-2}}} + \frac{m(t, \mcR(t))}{2\pi\mcR(t)}\\
	&\geq
	\frac{\ell\mcR(t)^{-3}}{\sqrt{1+\mcW(t)^2+\ell\mcR(t)^{-2}}} 
	\geq
	\frac{\ell}{r^3\sqrt{1+w^2+\ell r^{-2}}}.
	\end{align*}
A simple integration in time leads to
	\begin{equation*}
	\mcW(t)
	\geq
	\frac{\ell t}{r^3\sqrt{1+w^2+\ell r^{-2}}}+w
	\end{equation*}
for all $t\geq0$, and we immediately observe that for any $t>\frac{-wr^3\sqrt{1+w^2+\ell r^{-2}}}{\ell}$ we have  $\mcW(t)>0$, contradicting the assumption that $T_0=\infty$. Thus, we conclude that $T_0<\infty$, and moreover, the preceding argument leads us to the bound
	\begin{equation*}
	T_0\leq\frac{-wr^3\sqrt{1+w^2+\ell r^{-2}}}{\ell}.
	\end{equation*}
Since $\Tm < \infty$ and $\dot{\mcW}(t) > 0$ for all $t \geq 0$, we find
$$
\left \{
\begin{gathered}
\mcW(t)  < 0 \ \mathrm{for} \ t \in [0,\Tm),\\
\mcW(\Tm)  =0, \ \mathrm{and}\\
\mcW(t)  > 0 \ \mathrm{for} \ t \in (\Tm,\infty).
\end{gathered}
\right.$$
Now, the preceding analysis which relied on $w\geq0$ can be reproduced, only for times $t\geq\Tm$. From \eqref{eq:change-in-rest-vel} we have
	\begin{equation*}
	\frac{d}{dt}\sqrt{1+\mcW(t)^2+\ell\mcR(t)^{-2}} \biggr \vert_{t = \Tm}
	=
	\frac{\mcW(\Tm)m(\Tm,\mcR(\Tm))\mcR(\Tm)^{-1}}{2\pi\sqrt{1+\mcW(\Tm)^2+\ell\mcR(\Tm)^{-2}}}
	=
	0,
	\end{equation*}
and this implies that both $\mcR(t)^2$ and $\mcW(t)^2+\ell\mcR(t)^{-2}$ are minimized at $\Tm$, as their respective derivatives change from negative to positive at $t = \Tm$.
Thus, we define
$$\Rm^2 := \min_{t \geq 0} \mcR(t)^2 = \mcR(\Tm)^2$$
and
$$\Vm^2 := \min_{t \geq 0}\left(\mcW(t)^2+\ell\mcR(t)^{-2}\right)= \ell \mcR(\Tm)^{-2}.$$
For brevity we also define $\hat\mcV_0:=\mcV_0/\sqrt{1+\mcV_0^2}$. Then the following inequality holds:
\begin{equation}
\label{lrep-rel}
\Rm^2 \hat\mcV_0^2=\frac{\Rm^2\mcV_0^2}{1+\mcV_0^2}=\frac{\ell}{1+\mcV_0^2}\geq\frac{\ell}{1+w^2+\ell r^{-2}}.
\end{equation}
From \eqref{eq:R''-rel} it follows that
	\begin{align*}
	\frac12\frac{d^2}{dt^2}(\mcR(t)^2)
	&\geq
	\frac{\ell\mcR(t)^{-2}+\mcW(t)^2}{1+\mcW(t)^2+\ell\mcR(t)^{-2}}
	\geq
	\hat\mcV_0^2
	\end{align*}
for all $t\geq0$, which leads to
	\begin{equation}\label{eq:R-rel-lower-bound-1}
	\mcR(t)^2\geq\mcR_0^2+\hat\mcV_0^2(t-T_0)^2
	\end{equation}
for all $t\geq0$ since $\mcW(T_0)=0$. Evaluating at $t=0$ one obtains
	\begin{equation}\label{eq:r-rel-lower-bound}
	r^2\geq \mcR_0^2+\hat\mcV_0^2T_0^2.
	\end{equation}
Since our goal is to bound $\mcR(t)$ from below by $t^2$, we consider \eqref{eq:R-rel-lower-bound-1} divided by $t^2$:
	\begin{equation*}
	\frac{\mcR(t)^2}{t^2}\geq\frac{\mcR_0^2+\hat\mcV_0^2(t-T_0)^2}{t^2}
	\end{equation*}
which holds for any $t>0$. Now, the right side can be minimized over $t>0$ (by simply taking its derivative and determining roots). One finds
	\begin{equation*}
	\frac{\mcR_0^2+\hat\mcV_0^2(t-T_0)^2}{t^2}
	\geq
	\frac{\mcR_0^2\hat\mcV_0^2}{\mcR_0^2+\hat\mcV_0^2T_0^2}
	\end{equation*}
and we therefore obtain
	\begin{equation}\label{eq:R-rel-lower-bound-2}
	\mcR(t)^2
	\geq
	\frac{\mcR_0^2\hat\mcV_0^2}{\mcR_0^2+\hat\mcV_0^2T_0^2}t^2.
	\end{equation}
Plugging into \eqref{eq:R-rel-lower-bound-2} the bounds \eqref{lrep-rel} and  \eqref{eq:r-rel-lower-bound}, we find
	\begin{equation*}
	\mcR(t)^2
	\geq
	\frac{\ell r^{-2}}{1+w^2+\ell r^{-2}}t^2
	\end{equation*}
as required. 
The second result follows, as before, from the compactness of the set $S(0)$ and the angular momentum assumption \eqref{A}. 
Indeed, for $(r, w, \ell) \in S(0)$, there is $C > 0$ such that 
$$r^2(1 + w^2) + \ell  \leq C,$$
and using this bound within the first result yields
$$\mcR(t)^2 \geq C\ell t^2 \geq Ct^2.$$
Finally, the statement about the ``turn-around'' time is again obtained by setting $T = 0$ if $w \geq 0$ and $T = \Tm$ if $w < 0$.
\end{proof}

\subsection{Field and Potential Estimates}
We can now estimate the electric field in $L^\infty$ (Lemma \ref{L2}) and $L^p$ for $p> 2$ (Lemma \ref{Ep}), and then determine the asymptotic behavior of the potential $\mcU$ (Lemma \ref{lem:U-decay}). The only nontrivial element from the prequel which is required is the bound
		\begin{equation*}
	\mcR(t,0,r,w,\ell)^2
	\geq
	Ct^2,
	\quad
	\forall(r,w,\ell)\in S(0)
	\end{equation*}
for $t \geq 0$. 
As this was established for both \eqref{VP} and \eqref{RVP}, the estimates here hold for both systems as well. 

\begin{lemma}
\label{L2}
For solutions of \eqref{VP} or \eqref{RVP} 
we have
	\begin{equation}\label{eq:field-bounds}
	\mfR(t)^{-1}\lesssim\Vert E(t) \Vert_\infty \lesssim t^{-1}.
	\end{equation}
\end{lemma}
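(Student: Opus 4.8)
The plan is to bypass any direct estimation of the defining integral for $E$ and instead exploit the explicit representation \eqref{fieldang}. Since $|x/r| = 1$, that formula gives $|E(t,x)| = \frac{m(t,r)}{2\pi r}$, so that
$$\Vert E(t)\Vert_\infty = \sup_{r>0} \frac{m(t,r)}{2\pi r}.$$
Everything then reduces to controlling the ratio $m(t,r)/r$, for which the two relevant facts are that $m(t,\cdot)$ is nondecreasing with $0 \leq m(t,r) \leq \mcM$, and that the radial support of $\rho(t,\cdot)$ is squeezed into an annulus whose inner radius grows like $t$ and whose outer radius is exactly $\mfR(t)$. Both facts follow from the support localization already available from Lemma \ref{L1}, together with the fact that $f$ is constant along the characteristics, so that $\pi_r(S(t))$ is precisely the image of $S(0)$ under the flow $\mcR(t,0,\cdot)$.

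For the upper bound I would argue as follows. By Lemma \ref{L1}(2), every characteristic issuing from $S(0)$ satisfies $\mcR(t,0,r,w,\ell) \geq \sqrt{C}\,t$, whence $\rho(t,q) = 0$ for $q < \sqrt{C}\,t$ and therefore $m(t,r) = 0$ for such $r$. For $r \geq \sqrt{C}\,t$ one simply bounds $\frac{m(t,r)}{2\pi r} \leq \frac{\mcM}{2\pi r} \leq \frac{\mcM}{2\pi\sqrt{C}\,t}$. Taking the supremum over all $r > 0$ yields $\Vert E(t)\Vert_\infty \lesssim t^{-1}$.

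For the lower bound I would instead evaluate the field at the outer edge of the support. Since $\rho(t,q) = 0$ for $q > \mfR(t)$, all of the mass is enclosed within radius $\mfR(t)$, i.e. $m(t,\mfR(t)) = \mcM$. Choosing any $x$ with $|x| = \mfR(t)$ then gives
$$\Vert E(t)\Vert_\infty \geq \frac{m(t,\mfR(t))}{2\pi\mfR(t)} = \frac{\mcM}{2\pi\mfR(t)} \gtrsim \mfR(t)^{-1},$$
where $\mcM > 0$ because $f_0$ is nontrivial.

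The individual steps are short, and I do not expect a genuine obstacle: the only point requiring any care is the identification of $\pi_r(S(t))$ with the flow-image of $S(0)$, which is what converts the abstract support bound $\mcR(t)^2 \geq Ct^2$ into the concrete statement that $m(t,r)$ vanishes below radius $\sqrt{C}\,t$ and saturates at $\mcM$ at radius $\mfR(t)$. Once this localization is in hand, the monotonicity and boundedness of $m$ do the rest, and the matching constants ($\mcM/(2\pi\sqrt{C})$ above, $\mcM/(2\pi)$ below) confirm that both bounds are sharp up to constants.
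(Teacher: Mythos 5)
Your proposal is correct and follows essentially the same route as the paper: the upper bound comes from the support localization $\mcR(t,0,r,w,\ell)\geq\sqrt{C}\,t$ of Lemma \ref{L1} (which the paper justifies via the measure-preserving change of variables along characteristics, exactly the point you flag as needing care), forcing $m(t,r)$ to vanish for $r\lesssim t$ and hence $m(t,r)/(2\pi r)\lesssim t^{-1}$; the lower bound comes from evaluating the field at $|x|=\mfR(t)$, where the enclosed mass equals $\mcM>0$. No gaps.
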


\begin{proof}
We first show the upper bound and begin by estimating the enclosed mass. The Vlasov equation implies that for every $t \geq 0$ and $(r,w,\ell) \in \bS(t)$
\begin{equation}
\label{Vf}
f(t,r, w, \ell) = f_0(\mcR(0, t, r, w, \ell), \mcW(0, t, r, w, \ell), \ell).
\end{equation}
Hence, we find for any $R > 0$
\begin{eqnarray*}
m(t,R) & = & 2\pi \int_0^R \int_{-\infty}^\infty \int_0^\infty f(t,r, w, \ell) \ell^{-1/2} \ d\ell dw dr\\
& = & 2\pi \iiint\limits_{\bS(t)} f(t,r, w, \ell) \chfn_{\{r \leq R\}} \ell^{-1/2}  \ d\ell dw dr\\
& = & 2\pi \iiint\limits_{\bS(t)} f_0(\mcR(0, t, r, w, \ell), \mcW(0, t, r, w, \ell),\ell) \chfn_{ \{r^2 \leq R^2\} } \ell^{-1/2} \ d\ell dw dr\\
& = & 2\pi \iiint\limits_{\bS(0)} f_0(\tilde{r}, \tilde{w}, \tilde{\ell}) \chfn_{\{\mcR(t, 0, \tilde{r}, \tilde{w}, \tilde{\ell})^2 \leq R^2 \} } \tilde{\ell}^{-1/2}  \ d\tilde{\ell} d\tilde{w} d\tilde{r}
\end{eqnarray*}
where, in the last equality, we have used the change of variables
$$ \left \{
\begin{gathered}
\tilde{r} = \mcR(0, t, r, w, \ell)\\
\tilde{w} = \mcW(0, t, r, w, \ell)\\
\tilde{\ell} = \mcL(0, t, r, w, \ell) = \ell
\end{gathered}
\right. $$
with inverse mapping
$$ \left \{
\begin{gathered}
r = \mcR(t, 0, \tilde{r}, \tilde{w}, \tilde{\ell})\\
w = \mcW(t, 0, \tilde{r}, \tilde{w}, \tilde{\ell})\\
\ell = \mcL(t, 0, \tilde{r}, \tilde{w}, \tilde{\ell}) = \tilde{\ell}
\end{gathered}
\right. $$
and the well-known measure-preserving property (cf. \cite{Glassey}) which guarantees
$$\left \vert \frac{\partial(r, w, \ell)}{\partial (\tilde{r}, \tilde{w}, \tilde{\ell})} \right \vert = 1.$$
Due to Lemma \ref{L1}, we find
$$\mcR(t, 0, \tilde{r}, \tilde{w}, \tilde{\ell})^2 \geq C t^2$$
and thus
$$\{ (\tilde{r}, \tilde{w}, \tilde{\ell}) \in \bS(0) : \mcR(t, 0, \tilde{r}, \tilde{w}, \tilde{\ell})^2 \leq R^2\} \subseteq \{ (\tilde{r}, \tilde{w}, \tilde{\ell}) \in \bS(0) : Ct^2 \leq R^2 \}.$$
Using this produces the upper bound
\begin{eqnarray*}
m(t,R)  & \leq & 2\pi \iiint\limits_{\bS(0)} f_0(\tilde{r}, \tilde{w}, \tilde{\ell})  \chfn_{\{Ct^2 \leq R^2 \} } \tilde{\ell}^{-1/2}  \ d\tilde{\ell} d\tilde{w} d\tilde{r}\\
& = & \mcM  \chfn_{\{Ct^2 \leq R^2 \} }.
\end{eqnarray*}
With this, we have
$$|E(t,x)| = \frac{m(t,r)}{2\pi r} \leq \frac{\mcM}{2\pi r}  \chfn_{\{Ct^2 \leq r^2 \} } \leq Ct^{-1}$$
for every $t > 0, x \in \mathbb{R}^2$, and thus
\begin{equation}
\label{Edecay}
\Vert E(t) \Vert_\infty \leq Ct^{-1}.
\end{equation}

Next, we turn our attention to the lower bound in \eqref{eq:field-bounds} by representing the mass along the largest nontrivial spatial characteristic.
Using \eqref{Vf} and the aforementioned change of variables,
it follows that for any $t \geq 0$
$$\int_0^{\mfR(t)} \int_0^\infty \int_{-\infty}^\infty f(t,r, w, \ell) \ell^{-1/2}\ dw d\ell dr  = \int_0^{\mfR(0)} \int_0^\infty \int_{-\infty}^\infty f_0(\tilde{r}, \tilde{w}, \tilde{\ell}) \tilde{\ell}^{-1/2} \ d\tilde{w} d\tilde{\ell} d\tilde{r}.$$
Inserting the radial charge density into the representation of the enclosed mass and using the above equality, we have
\begin{eqnarray*}
m(t, \mfR(t)) & = & 2\pi \int_0^{\mfR(t)} \int_0^\infty \int_{-\infty}^\infty  f(t, r, w, \ell) \ell^{-1/2}\ dw d\ell dr\\
& = & 2\pi \int_0^{\mfR(0)} \int_0^\infty \int_{-\infty}^\infty f_0(\tilde{r}, \tilde{w}, \tilde{\ell}) \tilde{\ell}^{-1/2} \ d\tilde{w} d\tilde{\ell} d\tilde{r}\\
& = & 2\pi \iiint\displaylimits_{\bS(0)}f_0(\tilde{r}, \tilde{w}, \tilde{\ell}) \tilde{\ell}^{-1/2} \ d\tilde{w} d\tilde{\ell} d\tilde{r}\\
& = & \mcM.
\end{eqnarray*}

Thus, due to the field representation we find for any $t \geq 0$
$$\vert E(t, \mfR(t))\vert = \frac{m(t, \mfR(t))}{2\pi\mfR(t)} = \frac{\mcM}{2\pi \mfR(t)}.$$
Because $f_0$ is nontrivial, we conclude $\mcM \neq 0$.
Finally, since $|E(t,x)|$ attains this value at some $x \in \mathbb{R}^2$, we have
$$\Vert E(t) \Vert_\infty \geq C\mfR(t)^{-1}$$
for $t \geq 0$ and the proof is complete.
\end{proof}

To conclude the estimates implied by the repulsive force, we estimate the field in $L^p(\bfR^2)$ for $2 < p < \infty$ and obtain bounds that will lead to the stated decay rates.

\begin{lemma}
\label{Ep}
For $p \in \left ( 2, \infty \right)$ and solutions of \eqref{VP} or \eqref{RVP},  we have 
$$ \mfR(t)^{-1 + \frac{2}{p}} \lesssim \|E(t) \|_p \lesssim \| E(t) \|_\infty^{1-\frac{2}{p}}$$
\end{lemma}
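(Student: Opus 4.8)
The plan is to work directly with the explicit radial form of the field. Since $|E(t,x)| = m(t,r)/(2\pi r)$ depends only on $r = |x|$, I would write
$$\Vert E(t)\Vert_p^p = \int_{\bfR^2}\left(\frac{m(t,r)}{2\pi r}\right)^p dx = \frac{1}{(2\pi)^{p-1}}\int_0^\infty \frac{m(t,r)^p}{r^{p-1}}\, dr,$$
and then estimate this one-dimensional integral from above and below using two pointwise facts about the enclosed mass: that $0\le m(t,r)\le\mcM$ for all $r$, and that $m(t,r)=\mcM$ once $r\ge\mfR(t)$, because all particles lie inside the ball of radius $\mfR(t)$. This last fact is precisely the computation $m(t,\mfR(t))=\mcM$ already carried out in the proof of Lemma~\ref{L2}.

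For the lower bound I would simply discard the interior contribution and integrate over the exterior region $r\ge\mfR(t)$, where $m(t,r)=\mcM$ exactly. Since $p>2$, the tail integral $\int_{\mfR(t)}^\infty r^{1-p}\,dr$ converges to $\mfR(t)^{2-p}/(p-2)$, which yields $\Vert E(t)\Vert_p^p\gtrsim\mfR(t)^{2-p}$ and hence $\Vert E(t)\Vert_p\gtrsim\mfR(t)^{-1+2/p}$, as claimed.

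For the upper bound, the natural temptation is to interpolate via $\Vert E\Vert_p\le\Vert E\Vert_\infty^{1-2/p}\Vert E\Vert_2^{2/p}$, but this fails: the bound $|E(t,x)|\le\mcM/(2\pi r)$ decays only like $1/r$, so $\Vert E(t)\Vert_2$ diverges logarithmically and gives no usable information. Instead I would use the sharper pointwise bound $|E(t,x)|\le\min\{\Vert E(t)\Vert_\infty,\ \mcM/(2\pi r)\}$ and split the integral at the crossover radius $r_\ast=\mcM/(2\pi\Vert E(t)\Vert_\infty)$, where the two competing bounds coincide. On the disk $\{r\le r_\ast\}$ I bound $|E|$ by $\Vert E(t)\Vert_\infty$ and use that the disk has area $\pi r_\ast^2$, producing a term $\sim\Vert E(t)\Vert_\infty^p\,r_\ast^2\sim\Vert E(t)\Vert_\infty^{p-2}$; on the exterior $\{r\ge r_\ast\}$ I use $|E|\le\mcM/(2\pi r)$ together with the convergent tail integral (again using $p>2$) to obtain a second term of the same order. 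Summing gives $\Vert E(t)\Vert_p^p\lesssim\Vert E(t)\Vert_\infty^{p-2}$, i.e. $\Vert E(t)\Vert_p\lesssim\Vert E(t)\Vert_\infty^{1-2/p}$.

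The only genuine subtlety — and the step I expect to be the crux — is recognizing that one must split at the crossover radius $r_\ast$ rather than at $\mfR(t)$: since $\mfR(t)$ is generally much larger than $\Vert E(t)\Vert_\infty^{-1}$, splitting there would make the interior term far too big. Equivalently, the borderline $1/r$ decay places the field just outside $L^2$, which is exactly what rules out standard interpolation and forces the direct computation. Once the correct split is identified, every remaining piece is an explicit, convergent one-dimensional integral, and the hypothesis $p>2$ is used precisely to guarantee convergence of the exterior tails in both bounds.
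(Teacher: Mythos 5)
Your argument is correct and is essentially the paper's own proof: both establish the lower bound by restricting to the exterior region $r\ge\mfR(t)$ where $m(t,r)=\mcM$, and both establish the upper bound by splitting the integral at a radius comparable to $\Vert E(t)\Vert_\infty^{-1}$ (the paper introduces a general splitting radius $R$ and then optimizes, which lands exactly on your crossover radius $r_\ast$). Your side remark that the borderline $1/r$ tail places $E$ just outside $L^2$ and thus rules out standard interpolation is a nice explanation of why the direct splitting is necessary, but it does not change the substance of the argument.
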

\begin{proof}
These estimates are similar to those for the three-dimensional problem (see \cite{SP}), but we include them for completeness.
Indeed, we decompose the field integral as
$$\int |E(t,x) |^p \ dx = \int_{|x | < R} |E(t,x)|^p dx + (2\pi)^{1-p} \int_R^\infty \frac{m(t,r)^p}{r^{p}} rdr =: A + B$$
and estimate
$$A \leq 2\pi \|E(t) \|_\infty^p \int_0^R r dr = \pi R^2 \|E(t) \|_\infty^p,$$
while $B$ satisfies
$$B \leq (2\pi)^{1-p} \mcM^p \int_R^\infty r^{1-p} dr \leq CR^{2-p}$$
for $p > 2$.
Optimizing in $R$ yields $R = C\| E(t) \|_\infty^{-1}$ so that
$$\int |E(t,x) |^p \ dx \leq C\| E(t) \|_\infty^{p-2}$$
for any $t \geq 0$.
Raising this to the $\frac{1}{p}$ power yields the stated upper bound.

Next, we prove the lower bound. In particular, using the definition of the field and the maximal spatial support of $f$, we find
$$\int | E(t,x) |^p \ dx = (2\pi)^{1-p} \int_0^\infty m(t,r)^p r^{1-p} \ dr \geq (2\pi)^{1-p} \int_{\mfR(t)}^\infty m(t,r)^p r^{1-p} \ dr.$$
Now, for $r \geq \mfR(t)$, we note that
$m(t,r) = \mcM$ as shown in the proof of Lemma \ref{L2}. Thus, we have
$$ \int_{\mfR(t)}^\infty m(t,r)^p r^{1-p} \ dr = \mcM^p \int_{\mfR(t)}^\infty r^{1-p} \ dr = \frac{\mcM^p}{p-2}\mfR(t)^{2-p}$$
for $p > 2$.
Finally, this implies
$$\int | E(t,x) |^p\ dx \geq C\mfR(t)^{2-p}$$
and hence
$$\|E(t) \|_p \geq C\mfR(t)^{-1+\frac{2}{p}}$$
for any $t >1$.
\end{proof}

Finally, we estimate the behavior of the potential along characteristics and obtain preliminary estimates of characteristics using the field bound.

\begin{lemma}
\label{lem:U-decay}
Along particle characteristics of both \eqref{charang} and \eqref{charang-rel}, the potential $\mcU(t,r)$ satisfies
	\begin{equation*}
	-\mcU(t,\mcR(t))\sim\ln(t)\qquad\text{and}\qquad\|\mcU(t)\|_{\infty}\sim\ln(t).
	\end{equation*}
Furthermore, in the classical case \eqref{charang} we have
$$ \mfW(t) \lesssim \ln(t) \qquad \mathrm{and}  \qquad t \lesssim \mfR(t) \lesssim t\ln(t), $$
while in the relativistic case \eqref{charang-rel} we have
$$ \mfW(t) \lesssim \ln(t) \qquad \mathrm{and} \qquad  \mfR(t) \sim t.$$
\end{lemma}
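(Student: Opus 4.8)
The plan is to prove the three assertions in the order (i) the bound on $\mfW$, (ii) the bounds on $\mfR$, and (iii) the potential estimates, since the first two feed directly into the last; all three use only the field bound $\|E(t)\|_\infty\lesssim t^{-1}$ from Lemma~\ref{L2} together with $\mcR(t)^2\geq Ct^2$ on $S(0)$ from Lemma~\ref{L1}. For the momentum bound, note that for either system the momentum equation can be written as $\dot\mcW(t)\leq \ell\mcR(t)^{-3}+|E(t,\mcR(t))|$, using \eqref{fieldang} and, in the relativistic case \eqref{charang-rel}, that the Lorentz factor only shrinks the angular term. Lemma~\ref{L1} gives $\ell\mcR(t)^{-3}\lesssim t^{-3}$ on $S(0)$, which is integrable, while Lemma~\ref{L2} gives $|E(t,\mcR(t))|\leq\|E(t)\|_\infty\lesssim t^{-1}$. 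Integrating from a fixed large $t_0$ and absorbing the bounded contribution on $[0,t_0]$ yields
$$\mcW(t)\leq w+C+\int_{t_0}^t\left(Cs^{-3}+Cs^{-1}\right)ds\lesssim\ln t.$$
Since $\mcW$ is increasing (cf. \eqref{Winc}) and $w$ is bounded on the compact set $S(0)$, this controls $|\mcW(t)|$, and taking the supremum over $S(0)$ gives $\mfW(t)\lesssim\ln t$ for both systems.

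For the position bounds, in the classical case $\dot\mcR=\mcW\lesssim\ln t$, so integrating gives $\mcR(t)\lesssim t\ln t$ and hence $\mfR(t)\lesssim t\ln t$; the matching $\mfR(t)\gtrsim t$ is immediate from $\mcR(t)^2\geq Ct^2$. In the relativistic case $\dot\mcR=\mcW/\sqrt{1+\mcW^2+\ell\mcR^{-2}}\leq1$, so $\mcR(t)\leq r+t$ gives $\mfR(t)\lesssim t$, which together with $\mfR(t)\gtrsim t$ yields $\mfR(t)\sim t$.

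For the potential, I would start from \eqref{U} and, after an integration by parts (equivalently, the angular average of the logarithmic kernel over circles), rewrite it as $-\mcU(t,r)=\int_0^\infty\rho(t,s)\,s\,\ln(\max(r,s))\,ds$. The weight satisfies $\int_0^\infty\rho(t,s)\,s\,ds=\mcM/2\pi$, a positive constant in time, and $\rho(t)$ is supported in $[\inf\pi_r(S(t)),\mfR(t)]$ with $\inf\pi_r(S(t))\gtrsim t$ by Lemma~\ref{L1} and $\mfR(t)\lesssim t\ln t$ just established. Evaluating at $r=\mcR(t)$ and using $\mcR(t)\geq\sqrt{C}\,t$, the factor $\ln(\max(\mcR(t),s))$ is bounded below by $\ln\mcR(t)\gtrsim\ln t$ and above by $\ln\mfR(t)\lesssim\ln(t\ln t)\lesssim\ln t$, uniformly in $s$ on the support; integrating against $\rho$ gives $-\mcU(t,\mcR(t))\sim\ln t$. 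Since $-\mcU(t,\cdot)$ is nondecreasing (its derivative is $m/2\pi r\geq0$), the extreme value of $|\mcU(t,\cdot)|$ on the plasma region is attained at the outer edge $r=\mfR(t)$, where it equals $\frac{\mcM}{2\pi}\ln\mfR(t)\sim\ln t$, giving $\|\mcU(t)\|_\infty\sim\ln t$.

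The delicate step is the potential estimate, since unlike the momentum and position bounds it involves the \emph{entire} density $\rho(t)$ rather than a single characteristic. The representation via $\ln(\max(r,s))$ is what makes it tractable, reducing everything to the time-independent total mass together with the two-sided control $t\lesssim\inf\pi_r(S(t))\leq\mfR(t)\lesssim t\ln t$ on the support. Some care is needed because $\mcU(t,\cdot)$ is genuinely unbounded in space (it behaves like $-\frac{\mcM}{2\pi}\ln r$ as $r\to\infty$), so the sup-norm should be understood on the support of $f(t)$, which is precisely where the potential enters the energy.
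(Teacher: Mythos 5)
Your proposal is correct, and the momentum and position bounds are obtained exactly as in the paper: integrate $\dot\mcW\leq\ell\mcR^{-3}+\|E(t)\|_\infty\lesssim t^{-1}$ to get $\mfW(t)\lesssim\ln t$, then integrate again (resp.\ use $|\dot\mcR|\leq1$) for $\mfR(t)\lesssim t\ln t$ (resp.\ $\mfR(t)\sim t$), with the lower bound $\mfR(t)\gtrsim t$ from Lemma~\ref{L1}. Where you diverge is the potential. The paper keeps the two terms of \eqref{U} separate, pulls the $\int_0^\infty\rho(t,q)q\ln(q)\,dq$ term back to $S(0)$ via the measure-preserving change of variables along characteristics so that it becomes $\int f_0\,\ln(\mcR(t,0,\cdot))\,\tilde\ell^{-1/2}$, and bounds the mass term $\frac{1}{2\pi}\int_0^{\mcR(t)}\frac{m(t,q)}{q}\,dq$ by hand using that $\supp m(t,\cdot)$ is bounded away from $0$. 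You instead fold both terms into the single kernel identity $-\mcU(t,r)=\int_0^\infty s\,\rho(t,s)\ln(\max(r,s))\,ds$ (which is a correct Fubini computation from \eqref{U} and \eqref{massang}) and then use only mass conservation together with the two-sided localization $t\lesssim\inf\pi_r(S(t))\leq\mfR(t)\lesssim t\ln t$. The two routes consume identical inputs and give the same rates, but yours has two small advantages: it yields the exact value $-\mcU(t,\mfR(t))=\frac{\mcM}{2\pi}\ln\mfR(t)$ at the outer edge, making the $\|\mcU(t)\|_\infty\sim\ln t$ claim transparent via monotonicity of $-\mcU(t,\cdot)$, and it makes explicit a point the paper glosses over, namely that $\mcU(t,\cdot)$ is unbounded on all of $\R^2$ so the sup-norm must be read on the support of the plasma; this is indeed the interpretation consistent with the paper's own argument and with where the potential enters the energy.
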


\begin{proof}
We begin by establishing the maximal position and  {momentum} estimates.
From Lemma \ref{L1}, taking the supremum over the support of $f_0$ immediately yields
$\mfR(t)\gtrsim t$.
Conversely, integrating the characteristic ODEs and using Lemma \ref{L2}, we have
\begin{equation}
\label{eq:mfWrel}
|\mcW(t)| \leq |\mcW(1)| + C\int_1^t s^{-1} \ ds \lesssim \ln(t)
\end{equation}
for the characteristics of both \eqref{VP} and \eqref{RVP}.

Using the  {momentum} bound for the spatial characteristics then yields
	\begin{equation}
	\label{eq:R-upper-bd}
	\mcR(t) \leq \mcR(1) + \int_1^t C(1+ \ln(s) ) \ ds \lesssim t\ln(t)
	\end{equation}
and this further implies
	\begin{equation}
	\label{mfRub}
	\mfR(t) \lesssim t\ln(t).
	\end{equation}
Again, this estimate holds in both the classical and relativistic cases, though due to the relativistic velocity in \eqref{charang} being uniformly upper bounded by $1$,
we can further obtain
$$ \mcR(t) \leq \mcR(1) + \int_1^t 1 \ ds \lesssim t$$
and 
\begin{equation}
\label{eq:mfRrel}
\mfR(t) \lesssim t
\end{equation}
for characteristics of \eqref{RVP}.

With this, we can estimate the potential. Replacing $r$ with $\mcR(t)$ in \eqref{U} and then changing variables (as in the proof of Lemma \ref{L2}) gives
\begin{equation*}
\begin{split}
	-\mcU(t,\mcR(t)) 
	&=
	\frac{1}{2\pi}\int_0^{\mcR(t)} \frac{m(t,q)}{q} \ dq +\int_0^\infty\rho(t,q)q\ln(q)\ dq\\
	&=
	\frac{1}{2\pi}\int_0^{\mcR(t)} \frac{m(t,q)}{q} \ dq +\int_0^\infty\int_{-\infty}^\infty\int_0^\infty f(t,q,w,\ell)\ln(q)\, \ell^{-1/2}\ d \ell\ dw\ dq\\
	&=
	\frac{1}{2\pi}\int_0^{\mcR(t)} \frac{m(t,q)}{q} \ dq +\int_0^\infty\int_{-\infty}^\infty\int_0^\infty f_0(\tilde{q},\tilde{w},\tilde{\ell})\ln \left(\mcR(t,0,\tilde{q},\tilde{w},\tilde{\ell})\right)\, \tilde{\ell}^{-1/2}\ d \tilde\ell\ d\tilde w\ d\tilde q.
\end{split}
\end{equation*}
The nonnegativity of the mass and the lower bound on spatial characteristics from Lemma \ref{L1}, namely $\mcR(t)\gtrsim t$, then give the lower bound
	\begin{align*}
	-\mcU(t,\mcR(t))
	&=
	\frac{1}{2\pi}\int_0^{\mcR(t)} \frac{m(t,q)}{q} \ dq +\int_0^\infty\int_{-\infty}^\infty\int_0^\infty f_0(\tilde{q},\tilde{w},\tilde{\ell})\ln \left(\mcR(t,0,\tilde{q},\tilde{w},\tilde{\ell})\right)\, \tilde{\ell}^{-1/2}\ d \tilde\ell\ d\tilde w\ d\tilde q\\
	&\geq
	0+C\mcM\ln(Ct)\\
	&\gtrsim \ln (t).
	\end{align*}
Next, due to Lemma \ref{L1}, the support of $m(t,r)$ is bounded away from $r =0$ for $t$ sufficiently large, and using  \eqref{eq:R-upper-bd} and \eqref{mfRub} we obtain the upper bound
	\begin{align*}
	-\mcU(t,\mcR(t))
	&=
	\frac{1}{2\pi}\int_0^{\mcR(t)} \frac{m(t,q)}{q} \ dq +\int_0^\infty\int_{-\infty}^\infty\int_0^\infty f_0(\tilde{q},\tilde{w},\tilde{\ell})\ln \left(\mcR(t,0,\tilde{q},\tilde{w},\tilde{\ell})\right)\, \tilde{\ell}^{-1/2}\ d \tilde\ell\ d\tilde w\ d\tilde q\\
	&\leq
	\mcM\ln(\mfR(t))+C\mcM\ln(\mfR(t))\\
	&\leq
	C\ln(t\ln t)\\
	&\lesssim \ln (t).
	\end{align*}
This proves the stated behavior of $-\mcU(t,\mcR(t))$. 
 {Finally, as
$$ \| \mcU(t) \|_\infty \geq -\mcU(t,\mcR(t)) \gtrsim \ln (t)$$
and the upper bound on $-\mcU(t,\mcR(t))$ is uniform in $\mcR(t)$,
it immediately follows that $\|\mcU(t)\|_{\infty}\sim\ln(t)$, and the proof is complete.}
\end{proof}

\section{Energy Estimates}
\label{estimates2}
Now that we have obtained sharp estimates for the behavior of the potential, we can use energy conservation to further refine the growth estimates of momenta in the classical case and obtain lower bounds for both systems.
Here, we treat separately \eqref{VP} and its relativistic counterpart \eqref{RVP}, as the velocity and kinetic energy in these cases are different, and this leads to different rates within the two systems.
For either system, we define
$$T_0 = \sup_{(r,w,\ell) \in S(0)} T(r,w,\ell)$$
where $T(r,w,\ell)$ is the ``turn-around time'' (defined in Lemma \ref{L1}),
and note that $T_0$ is bounded above by a constant that depends only on $S(0)$ due to Lemma \ref{L1} and \eqref{A}.
Hence, taking $t$ sufficiently large implies 
\begin{equation}
\label{Wg1}
\mcW(t,0,r,w,\ell) > 0
\end{equation}
for all $(r,w,\ell) \in S(0)$.

\subsection{The \eqref{VP} System}
We start with \eqref{VP} and its corresponding system of characteristics \eqref{charang}.
\begin{lemma}
\label{Lrefine}
Let $(r,w,\ell) \in S(0)$ be given and let $(\mcR(t), \mcW(t), \ell)$ satisfy \eqref{charang} and \eqref{charanginit} for all $t \geq 0$.
Then, we have
$$\mcW(t,0,r,w,\ell) \lesssim \sqrt{\ln(t)},$$
and $$\mcR(t,0,r,w,\ell) \lesssim t\sqrt{\ln(t)}.$$
Furthermore, the maximal positions and momenta satisfy
$$\mfW(t) \sim \sqrt{\ln(t)},$$
and $$\mfR(t) \sim t\sqrt{\ln(t)}.$$
\end{lemma}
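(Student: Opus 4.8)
The plan is to prove the two \emph{pointwise} upper bounds by a direct analysis of the characteristic system \eqref{charang}, and then to extract the matching lower bounds for the suprema $\mfW$ and $\mfR$ from conservation of energy. Throughout I would work on the interval $[T_0,\infty)$, where $T_0$ is the finite global turn-around time, so that $\mcW(s)>0$ and $\mcR(s)$ is increasing for every $(r,w,\ell)\in S(0)$.

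For the upper bound on the momentum, I would integrate the identity \eqref{V2}. The key observation is that, since $\dot\mcR=\mcW$, its right-hand side is a logarithmic derivative that can be dominated using only $0\le m\le\mcM$:
$$\frac{m(s,\mcR(s))}{\pi\mcR(s)}\mcW(s)=\frac{m(s,\mcR(s))}{\pi}\frac{d}{ds}\ln\mcR(s)\le\frac{\mcM}{\pi}\frac{d}{ds}\ln\mcR(s),$$
where the inequality uses $\tfrac{d}{ds}\ln\mcR(s)\ge0$ on $[T_0,\infty)$. Integrating from $T_0$ to $t$ gives $\mcW(t)^2+\ell\mcR(t)^{-2}\le C+\frac{\mcM}{\pi}\ln\mcR(t)$, and since $\mcR(t)\le\mfR(t)\lesssim t\ln t$ by \eqref{mfRub} we have $\ln\mcR(t)\lesssim\ln t$, hence the pointwise bound $\mcW(t)\lesssim\sqrt{\ln t}$. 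Feeding this into $\dot\mcR=\mcW$ yields $\mcR(t)=\mcR(T_0)+\int_{T_0}^t\mcW\lesssim\int^t\sqrt{\ln s}\,ds\lesssim t\sqrt{\ln t}$, and taking suprema over $S(0)$ produces $\mfW(t)\lesssim\sqrt{\ln t}$ and $\mfR(t)\lesssim t\sqrt{\ln t}$.

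The lower bounds come from the growth of the kinetic energy. By conservation, $\mathrm{KE}(t)=\mcE_{\mathrm{VP}}-\mathrm{PE}(t)$; changing variables to the initial data and invoking the uniform estimate $-\mcU(t,\mcR(t))\gtrsim\ln t$ of Lemma \ref{lem:U-decay}, the potential energy satisfies $\mathrm{PE}(t)=\tfrac12\int\mcU(t,\mcR(t))f_0\ell^{-1/2}\le-C\ln t$, so that $\mathrm{KE}(t)\gtrsim\ln t$. Since $\ell\mcR(t)^{-2}\lesssim t^{-2}$ on $S(0)$ by Lemma \ref{L1}, this forces $\int\mcW(t)^2 f_0\ell^{-1/2}\gtrsim\ln t$; bounding the integrand by $\mfW(t)^2$ and using the finiteness of the invariant mass gives $\mfW(t)^2\gtrsim\ln t$, i.e. $\mfW(t)\gtrsim\sqrt{\ln t}$, which together with the upper bound yields $\mfW(t)\sim\sqrt{\ln t}$.

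For the position I would use a virial (second moment) identity. Setting $I(t)=\int\mcR(t)^2 f_0\ell^{-1/2}$ and differentiating twice along \eqref{charang} gives $I''(t)=4\,\mathrm{KE}(t)+\tfrac1\pi\int m(t,\mcR(t))f_0\ell^{-1/2}\ge 4\,\mathrm{KE}(t)\gtrsim\ln t$. Integrating twice yields $I(t)\gtrsim t^2\ln t$, and since $I(t)\le\mfR(t)^2\int f_0\ell^{-1/2}$ we conclude $\mfR(t)\gtrsim t\sqrt{\ln t}$, hence $\mfR(t)\sim t\sqrt{\ln t}$. I expect the lower bounds to be the main obstacle: one must first be certain the energies are finite and that the potential estimate is uniform across all characteristics, which is exactly what \eqref{A} and Remark \ref{rek:integrability-of-energies} guarantee. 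The virial step is the crucial device, since it converts the purely \emph{averaged} information carried by $\mathrm{KE}$ into a bound on the \emph{supremum} $\mfR$; a single characteristic need not retain persistently large enclosed mass, which is precisely why this argument yields no sharp pointwise lower bound on $\mcR$.
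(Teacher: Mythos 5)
Your proof is correct, and the lower-bound half of your argument (energy conservation giving $\mathrm{KE}(t)\gtrsim\ln t$, hence $\mfW(t)\gtrsim\sqrt{\ln t}$, followed by the virial identity to convert the averaged kinetic-energy growth into $\mfR(t)\gtrsim t\sqrt{\ln t}$) is exactly the paper's argument. Where you genuinely diverge is the pointwise upper bound on $\mcW$: the paper shows that the augmented particle energy $\tfrac12\bigl(\mcW(t)^2+\ell\mcR(t)^{-2}\bigr)+\mcU(t,\mcR(t))$ is eventually non-increasing (its derivative collapses to $\partial_t\mcU(t,\mcR(t))$, which is nonpositive once all momenta on the support are positive), and then invokes $-\mcU(t,\mcR(t))\lesssim\ln t$; you instead integrate \eqref{V2} directly, bounding its right side by $\tfrac{\mcM}{\pi}\tfrac{d}{ds}\ln\mcR(s)$ and using the crude bound $\mfR(t)\lesssim t\ln t$ from \eqref{mfRub} to get $\ln\mcR(t)\lesssim\ln t$. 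Your route is more elementary — it avoids computing $\partial_t\mcU$ through the Vlasov equation and does not need the sign condition on \emph{all} momenta simultaneously — while the paper's version makes the energy-exchange mechanism explicit and reuses the potential estimate it has already proved; the two are essentially the same estimate in disguise, since the paper's bound on $-\mcU(t,\mcR(t))$ is itself obtained from $\int_0^{\mcR}m(t,q)q^{-1}\,dq\le\mcM\ln\mfR(t)$. One point you should make explicit for the integrated inequality $\mcW(t)^2+\ell\mcR(t)^{-2}\le C+\tfrac{\mcM}{\pi}\ln\mcR(t)$ to hold with a constant uniform over $S(0)$: you need $\mcR$ at the turn-around time to be bounded below away from zero uniformly, which follows from \eqref{A} together with the compactness of $S(0)$ (since $\ell\,\mcR^{-2}$ at turn-around is dominated by $w^2+\ell r^{-2}$); this is a one-line remark, not a gap.
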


\begin{proof}
To prove the first conclusion, we will use the exchange of energy from potential to kinetic.
In particular, computing an augmented change in energy along particle trajectories, we find
\begin{align*}
	\frac{d}{dt} \Big ( \frac{1}{2} \big (\mcW(t)^2&+ \ell \mcR(t)^{-2} \big )  + \mcU(t,\mcR(t)) \Big )
	=\\
	&=
	\mcW(t)\dot{\mcW}(t)-\ell\mcR(t)^{-3}\mcW(t)+\partial_t\mcU(t,\mcR(t))+\partial_r\mcU(t,\mcR(t))\mcW(t)\\
	&=
	\mcW(t)\left(\ell\mcR(t)^{-3}+\frac{m(t,\mcR(t))}{2\pi\mcR(t)}-\ell\mcR(t)^{-3}-\frac{m(t,\mcR(t))}{2\pi\mcR(t)}\right)+\partial_t\mcU(t,\mcR(t))\\
	&=
	\partial_t\mcU(t,\mcR(t))\\
	&=
	-\int_{\mcR(t)}^\infty \int_{-\infty}^\infty \int_0^\infty w q^{-1} f(t,q,w,\ell) \ell^{-1/2} \ d\ell dw dq.
	\end{align*}
  {The last equality is obtained by taking a time derivative of the expression \eqref{U} of $\mcU$,  using the Vlasov equation \eqref{vlasovang} to eliminate the term $\partial_tf$ and finally integrating by parts in $q$.} Due to \eqref{Wg1}, all momenta on the support of $f(t)$ are positive for sufficiently large times, and it follows that the above derivative is eventually nonpositive.
Integrating for large times gives
 {$$\frac{1}{2} \big (\mcW(t)^2+ \ell \mcR(t)^{-2} \big )  + \mcU(t,\mcR(t)) \leq \frac{1}{2} \big (\mcW(T_0)^2+ \ell \mcR(T_0)^{-2} \big ) + \mcU(T_0,\mcR(T_0))$$
for all $t \geq T_0$.}
Therefore, using Lemma \ref{lem:U-decay} we find
$$ \mcW(t)^2 \lesssim C - \mcU(t,\mcR(t)) \lesssim \ln(t),$$
and the first conclusion follows.
Of course, integrating the upper bound on momenta yields the position estimate
 {$$\mcR(t) \leq \mcR(T_0) + C\int_{T_0}^t \sqrt{\ln(s)} \ ds  \lesssim t\sqrt{\ln(t)}.$$
Further taking the supremum over $(r,w,\ell) \in S(0)$ also yields the upper bounds on the maximal position and momentum}.

Finally, we use energy conservation to obtain the stated lower bounds.
In particular, we find
\begin{equation*}
\begin{split}
\mcE_{\mathrm{VP}} & - \frac{1}{2}\int_0^\infty \int_{-\infty}^\infty \int_0^\infty \mcU(t,r) f(t,r,w,\ell) \ell^{-1/2} \ d\ell dw dr\\
& = \frac{1}{2} \int_0^\infty \int_{-\infty}^\infty \int_0^\infty (w^2 + \ell r^{-2} ) f(t,r,w,\ell) \ell^{-1/2} \ d\ell dw dr \\ 
& = \frac{1}{2} \int_0^\infty \int_{-\infty}^\infty \int_0^\infty (\mcW(t,0,r,w,\ell)^2 + \ell \mcR(t,0,r,w,\ell)^{-2} ) f_0(r,w,\ell) \ell^{-1/2} \ d\ell dw dr \\  
&  {\lesssim \mfW(t)^2 + t^{-2}.}
\end{split}
\end{equation*}
From Lemma \ref{lem:U-decay}, the left side satisfies
\begin{equation*}
\begin{split}
\mcE_{\mathrm{VP}} & - \frac{1}{2}\int_0^\infty \int_{-\infty}^\infty \int_0^\infty \mcU(t,r) f(t,r,w,\ell) \ell^{-1/2} \ d\ell dw dr\\
& = \mcE_{\mathrm{VP}} + \frac{1}{2}\int_0^\infty \int_{-\infty}^\infty \int_0^\infty \Big ( -\mcU(t,\mcR(t,0,r,w,\ell)) \Big ) f_0(r,w,\ell) \ell^{-1/2} \ d\ell dw dr\\
&  {\gtrsim 1 + \ln(t).}
\end{split}
\end{equation*}
Combining these inequalities and taking $t$ sufficiently large yields
$$\mfW(t) \gtrsim \sqrt{\ln(t)}.$$
To obtain the lower bound on positions, we use the virial identity.
In particular, a brief calculation (see \cite[eq. (4.60)]{Glassey}) gives
\begin{equation*}
\begin{split}
& \frac{d^2}{dt^2} \left ( \frac{1}{2} \int_0^\infty \int_{-\infty}^\infty \int_0^\infty r^2 f(t,r,w,\ell) \ell^{-1/2} d \ell dw dr \right )\\
& \qquad  =\int_0^\infty \int_{-\infty}^\infty \int_0^\infty \left ( w^2 + \ell r^{-2} + \frac{1}{2\pi} m(t,r) \right ) f(t,r,w,\ell) \ell^{-1/2} d \ell dw dr.
\end{split}
\end{equation*}
In view of the lower bound on the kinetic energy established above and the nonnegativity of the mass, we find
$$\frac{d^2}{dt^2} \left ( \frac{1}{2} \int_0^\infty \int_{-\infty}^\infty \int_0^\infty r^2 f(t,r,w,\ell) \ell^{-1/2} d \ell dw dr \right ) \gtrsim \ln(t).$$
Lastly, integrating twice gives
$$t^2 \ln(t) \lesssim \int_0^\infty \int_{-\infty}^\infty \int_0^\infty r^2 f(t,r,w,\ell) \ell^{-1/2} d \ell dw dr \lesssim \mfR(t)^2,$$
which proves the final lower bound.
\end{proof}

\subsection{The \eqref{RVP} System}
Now we consider the relativistic system \eqref{RVP}, for which particle velocities are uniformly bounded above by one, and the kinetic energy is first-order, rather than second-order, in the momentum variable. Consequently, we obtain a logarithmic lower bound for the outward  {momentum} $\mcW(t)$.
\begin{lemma}
\label{Lrefine-rel}
The maximal  {momentum} on the support of $f(t)$ satisfies
$$\mfW(t) \gtrsim \ln(t).$$
\end{lemma}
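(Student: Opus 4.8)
The plan is to follow the same energy-balance strategy used for the lower bound in Lemma~\ref{Lrefine}, but with the relativistic kinetic energy in place of its classical analogue. The decisive difference is that the relativistic kinetic energy density $\sqrt{1+w^2+\ell r^{-2}}$ grows only \emph{linearly} in the momentum magnitude rather than quadratically, so that a kinetic energy of size $\ln(t)$ forces $\mfW(t) \gtrsim \ln(t)$ rather than $\sqrt{\ln(t)}$.

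Concretely, I would start from conservation of $\mcE_{\mathrm{RVP}}$ and isolate the kinetic term:
\begin{equation*}
\mcE_{\mathrm{RVP}} - \frac{1}{2}\int_0^\infty \int_{-\infty}^\infty \int_0^\infty \mcU(t,r) f(t,r,w,\ell)\ell^{-1/2}\, d\ell\, dw\, dr = \int_0^\infty \int_{-\infty}^\infty \int_0^\infty \sqrt{1+w^2+\ell r^{-2}}\, f(t,r,w,\ell)\ell^{-1/2}\, d\ell\, dw\, dr.
\end{equation*}
For the left-hand side I would apply the measure-preserving change of variables $(r,w,\ell)\mapsto(\mcR(t,0,r,w,\ell),\mcW(t,0,r,w,\ell),\ell)$ exactly as in the proof of Lemma~\ref{L2}, rewriting the potential contribution as $\frac{1}{2}\int(-\mcU(t,\mcR(t,0,r,w,\ell)))f_0\,\ell^{-1/2}$. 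Invoking the lower bound $-\mcU(t,\mcR(t))\gtrsim \ln(t)$ from Lemma~\ref{lem:U-decay} together with the nontriviality of $f_0$ then yields that the left-hand side is $\gtrsim 1 + \ln(t)$.

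For the right-hand side I would again change variables to express the kinetic energy as $\int \sqrt{1+\mcW(t)^2+\ell\mcR(t)^{-2}}\, f_0\,\ell^{-1/2}$ and bound it above. Since $f_0$ has compact support, $\ell$ is bounded on $S(0)$, and Lemma~\ref{L1} gives $\mcR(t)^2\geq Ct^2$, so that $\ell\mcR(t)^{-2}\lesssim t^{-2}$; combined with $\mcW(t)\leq \mfW(t)$ and the subadditivity of the square root, this produces $\sqrt{1+\mcW(t)^2+\ell\mcR(t)^{-2}}\lesssim 1 + \mfW(t)$ uniformly on the support. Integrating against $f_0\,\ell^{-1/2}$ and using conservation of the total mass $\mcM$, the kinetic energy is therefore $\lesssim 1 + \mfW(t)$. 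Combining the two sides and taking $t$ sufficiently large produces $\ln(t)\lesssim \mfW(t)$, as claimed.

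I do not anticipate a serious obstacle, since this argument is structurally parallel to the classical lower bound in Lemma~\ref{Lrefine}; the only point requiring care is the passage from the kinetic energy density to a clean multiple of $1+\mfW(t)$, where one must verify that the angular-momentum term $\ell\mcR(t)^{-2}$ is genuinely negligible (a consequence of compact support together with the quadratic-in-$t$ growth of $\mcR$) so that the linear growth of $\sqrt{1+w^2+\ell r^{-2}}$ in $|w|$ is what ultimately controls the estimate.
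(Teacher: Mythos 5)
Your proposal is correct and follows essentially the same route as the paper: conservation of $\mcE_{\mathrm{RVP}}$, the measure-preserving change of variables along characteristics, the lower bound $-\mcU(t,\mcR(t))\gtrsim\ln(t)$ from Lemma~\ref{lem:U-decay}, and the upper bound of the relativistic kinetic energy by $\sqrt{1+\mfW(t)^2+t^{-2}}\lesssim 1+\mfW(t)$. Your added remark that the linear (rather than quadratic) dependence of the kinetic energy density on $|w|$ is what upgrades the rate from $\sqrt{\ln(t)}$ to $\ln(t)$ is exactly the mechanism the paper exploits.
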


\begin{proof}
As in the proof of Lemma \ref{Lrefine}, we use energy conservation to obtain the result.
In particular, we find
\begin{equation*}
\begin{split}
\mcE_{\mathrm{RVP}} & - \frac{1}{2}\int_0^\infty \int_{-\infty}^\infty \int_0^\infty \mcU(t,r) f(t,r,w,\ell) \ell^{-1/2} \ d\ell dw dr\\
& = \int_0^\infty \int_{-\infty}^\infty \int_0^\infty \sqrt{ 1 + w^2 + \ell r^{-2} } f(t,r,w,\ell) \ell^{-1/2} \ d\ell dw dr \\ 
& =  \int_0^\infty \int_{-\infty}^\infty \int_0^\infty \sqrt{ 1+ \mcW(t,0,r,w,\ell)^2 + \ell \mcR(t,0,r,w,\ell)^{-2} } f_0(r,w,\ell) \ell^{-1/2} \ d\ell dw dr \\  
& \lesssim \sqrt{1 + \mfW(t)^2 + t^{-2}}.
\end{split}
\end{equation*}
From Lemma \ref{U}, the left side satisfies
\begin{equation*}
\begin{split}
\mcE_{\mathrm{RVP}} & - \frac{1}{2}\int_0^\infty \int_{-\infty}^\infty \int_0^\infty \mcU(t,r) f(t,r,w,\ell) \ell^{-1/2} \ d\ell dw dr\\
& = \mcE_{\mathrm{RVP}} + \frac{1}{2}\int_0^\infty \int_{-\infty}^\infty \int_0^\infty \Big ( -\mcU(t,\mcR(t,0,r,w,\ell)) \Big ) f_0(r,w,\ell) \ell^{-1/2} \ d\ell dw dr\\
&  {\gtrsim 1 + \ln(t)}.
\end{split}
\end{equation*}
Combining these inequalities and taking $t$ sufficiently large yields
$$\mfW(t) \gtrsim \ln(t)$$
as desired.
\end{proof}

\section{Estimates of the Charge Density}
\label{density}

We first address upper bounds for the \eqref{VP} and \eqref{RVP} systems.

\begin{lemma}
\label{L5}
The solutions of \eqref{VP} and \eqref{RVP} both satisfy
$$\Vert \rho(t) \Vert_\infty \lesssim t^{-1}.$$
\end{lemma}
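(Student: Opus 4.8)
The plan is to start from the reduced density \eqref{rhoang} and peel off the factor $t^{-1}$ from the explicit prefactor $1/r$, thereby reducing the claim to a uniform bound on the mass carried by a single radial fiber. Using conservation along characteristics \eqref{Vf} to write $f(t,r,w,\ell)=f_0(\tilde r,\tilde w,\ell)$ with $\tilde r=\mcR(0,t,r,w,\ell)$ and $\tilde w=\mcW(0,t,r,w,\ell)$, the compact support of $f_0$ together with \eqref{A} forces the integrand to vanish unless $\ell\in[\ell_1,\ell_2]$ with $0<\ell_1\le\ell_2<\infty$; hence $\ell^{-1/2}\le\ell_1^{-1/2}$ and the $\ell$-integration runs over a bounded set. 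Since Lemma~\ref{L1} gives $\mcR(t,0,\tilde r,\tilde w,\ell)\ge ct$ for every $(\tilde r,\tilde w,\ell)\in S(0)$, each point of the support at time $t$ satisfies $r\ge ct$, so that $1/r\lesssim t^{-1}$. It therefore suffices to prove
\[
\sup_{r,\,\ell}\ \int_{-\infty}^{\infty} f(t,r,w,\ell)\,dw\ \lesssim\ 1
\]
uniformly in $t$, i.e. that the $w$-support of the fiber at fixed $(r,\ell)$ has measure bounded independently of $t$ and $r$. Because these inputs ($r\gtrsim t$ and a bounded $\ell$-window) were established for both \eqref{charang} and \eqref{charang-rel} in Lemma~\ref{L1}, the argument will cover \eqref{VP} and \eqref{RVP} simultaneously.

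To control the fiber I would change variables from the terminal momentum $w$ to the initial momentum $\tilde w$ at fixed $(t,r,\ell)$ and estimate the associated Jacobian. Since $f_0$ is supported in a bounded $\tilde w$-window $[w_1,w_2]$, a bound of the form $|\partial\tilde w/\partial w|\gtrsim 1$ (equivalently, a uniform Lipschitz estimate $|\partial w/\partial\tilde w|\lesssim 1$ for the inverse map along the constraint $\mcR(t,0,\tilde r,\tilde w,\ell)=r$) immediately yields $\int f_0(\tilde r,\tilde w,\ell)\,dw\le C\|f_0\|_\infty(w_2-w_1)\lesssim 1$, as desired. The monotone structure needed for such a change of variables is suggested by \eqref{Winc}: the radial momentum is strictly increasing along every characteristic, and the repulsive (nonnegative) field should guarantee that two characteristics sharing the same value of $\ell$ cannot cross, so that their radii and momenta remain ordered for all time. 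This ordering is precisely what would force the fiber map to be a monotone diffeomorphism and supply the Jacobian bound.

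The main obstacle is that the force is self-consistent. The variational equation for $\partial_{\tilde w}\mcW$ contains the factor $\partial_r\!\left(\ell r^{-3}+m(t,r)/(2\pi r)\right)$, whose mean-field part $\rho(t,r)-m(t,r)/(2\pi r^2)$ has indefinite sign, so one cannot simply integrate the linearized flow to obtain the Jacobian estimate. The way around this is to argue at the level of the nonlinear flow rather than its linearization: for two equal-$\ell$ characteristics, a comparison/non-crossing argument shows that if they are ordered in $(\mcR,\mcW)$ at one instant they remain ordered, and that the outer trajectory always encloses at least as much mass, which reinstates the monotonicity of the fiber map and prevents the $w$-support from spreading.

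Alternatively, and perhaps more cleanly, one may bypass the pointwise Jacobian entirely by invoking the measure-preserving property of the flow at fixed $\ell$ (the Jacobian of $(\tilde r,\tilde w)\mapsto(r,w)$ equals $1$, as recorded in the proof of Lemma~\ref{L2}) together with the non-crossing of trajectories: area preservation keeps the total fiber mass finite, while non-crossing guarantees that the image of $\mathrm{supp}\,f_0$ is a thin monotone band over which each fiber integral is uniformly bounded. Combining the resulting $O(1)$ bound on $\int f(t,r,w,\ell)\,dw$ with the factor $1/r\lesssim t^{-1}$ and the bounded $\ell$-integration then gives $\|\rho(t)\|_\infty\lesssim t^{-1}$ for both systems. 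I expect the non-crossing/comparison step to be the delicate part, since it is exactly where the time-dependent mean-field force must be shown not to destroy the ordering of characteristics.
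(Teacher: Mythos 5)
Your reduction is exactly the paper's: write $\rho$ via \eqref{rhoang}, use Lemma~\ref{L1} to get $r^{-1}\lesssim t^{-1}$ on $\pi_r(S(t))$, use \eqref{A} and compact support to confine $\ell$ to a fixed window, and reduce everything to showing that the $w$-fiber $\{w: f(t,r,w,\ell)\neq 0\}$ has measure bounded uniformly in $t$, $r$, $\ell$. The gap is that you never actually prove this last (and only nontrivial) step. Both mechanisms you propose for it are unsubstantiated, and the first is likely false as stated: for two equal-$\ell$ characteristics with $\mcR_1<\mcR_2$ and $\mcW_1<\mcW_2$, one has
\[
\frac{d}{dt}(\mcW_2-\mcW_1)=\ell\left(\mcR_2^{-3}-\mcR_1^{-3}\right)+\frac{1}{2\pi}\left(\frac{m(t,\mcR_2)}{\mcR_2}-\frac{m(t,\mcR_1)}{\mcR_1}\right),
\]
where the first term is strictly negative and the second has indefinite sign (as you yourself note for the linearization), so the ordering in $\mcW$ need not persist, and once it flips the ordering in $\mcR$ can be lost too. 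Hence ``non-crossing'' in the sense you need (a monotone fiber map, or a thin monotone band) is not available, and area preservation alone cannot bound a one-dimensional fiber of a two-dimensional set.

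The paper closes this step by a much more elementary backward-dispersion argument that sidesteps Jacobians and orderings entirely. Take $(r,w_1,\ell),(r,w_2,\ell)\in S(t)$ and integrate \eqref{charang} backward to time $2$:
\[
\mcR(2,t,r,w_k,\ell)=r-w_k(t-2)+\int_2^t\!\!\int_s^t\left(\frac{m(\tau,\mcR(\tau))}{2\pi\mcR(\tau)}+\ell\,\mcR(\tau)^{-3}\right)d\tau\,ds .
\]
By Lemma~\ref{L2} the force along characteristics on the support is $\lesssim\tau^{-1}$, so the double integral is $O(t)$; since the positions at time $2$ lie in a fixed compact set, subtracting the two identities gives $|w_1-w_2|(t-2)\leq C+Ct$, i.e.\ $|w_1-w_2|\lesssim 1$. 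In the relativistic case the same computation bounds the difference of the $\hat w$'s, and the uniform positive lower bound on $\partial_w\bigl(w/\sqrt{1+w^2+\ell r^{-2}}\bigr)$ on the support converts this into a bound on $|w_1-w_2|$. You should replace your non-crossing/Jacobian step with this argument; the rest of your proof then goes through.
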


\begin{proof}
Our strategy is similar to that of \cite{Horst}, and we use backwards characteristics to estimate the size of the $w$ support of $f(t,r,w,\ell)$ for fixed $r, \ell > 0$.
 {As the estimates may depend more sensitively on time, we may employ the generic constant $C > 0$ in some places, rather than the ``$\lesssim$'' notation.}
First, consider solutions of \eqref{VP}.
Note that due to Lemma \ref{L2} and the time-reversibility of characteristics, we have
\begin{equation}
\label{fielddecay}
\frac{m(\tau, \mcR(\tau, t, r, w, \ell))} {2\pi\mcR(\tau, t, r, w, \ell)} + \ell\mcR(\tau, t, r, w, \ell)^{-3}  
 \leq C\tau^{-1}
\end{equation}
for any $\tau \geq 2$  and $(r,w,\ell) \in \bS(t)$.
Let $(r, w_1, \ell), (r, w_2, \ell) \in \bS(t)$ be given.
Then, integrating the characteristic equations \eqref{charang}
we find for $t \geq 2$ and $k=1,2$
$$\mcR(2, t, r, w_k, \ell) = r - w_k (t-2) + \int_2^t \int_s^t \left ( \frac{m(\tau, \mcR(\tau))}{2\pi\mcR(\tau)} + \ell\mcR(\tau)^{-3} \right )  \ d\tau ds.$$
Hence, subtracting these expressions and using \eqref{fielddecay} yields for $t$ sufficiently large
\begin{eqnarray*}
\left |\mcR(2,t, r, w_1, \ell) - \mcR(2,t, r, w_2, \ell) \right |
& \geq & \left | w_1 - w_2 \right | (t-2) - C\int_2^t \int_s^t \tau^{-1} \ d\tau ds\\
& \geq & \left | w_1 - w_2 \right | (t-2) - Ct.
\end{eqnarray*}
Due to the global existence result, $f(2,r,w,\ell)$ is compactly supported and we note that
$$\left |\mcR(2,t, r, w_1, \ell) - \mcR(2,t,r,w_2, \ell) \right | \leq |\mcR(2,t, r, w_1, \ell) | + | \mcR(2,t,r,w_2, \ell) | \leq C.$$
Rearranging the inequality above then produces
$$|w_1 - w_2| \lesssim 1.$$
Therefore, the diameter of the  {momentum} support is uniformly bounded.
This implies that for any $t$ sufficiently large and fixed $r, \ell > 0$, there is $C > 0$ and $w_0 \in \mathbb{R}$ such that
\begin{equation}
\label{supportsubset}
\left \{ w : f(t,r, w, \ell) \neq 0 \right \} \subseteq \left \{ w \in \mathbb{R} : | w - w_0 | \leq C \right \}.
\end{equation}

To obtain a similar estimate in the case of the relativistic system \eqref{RVP}, we merely repeat the steps of this argument, using the same field bound.
A straightforward calculation as above then
allows us to estimate the difference between differing velocities for $t$ sufficiently large, namely
$$\left |\frac{w_1}{\sqrt{1 +w_1^2 +\ell r^{-2}}} - \frac{w_2}{\sqrt{1 +w_2^2 +\ell r^{-2}}} \right| \lesssim 1$$
upon integrating the field bound.
Finally, because the derivative of $w \mapsto \frac{w}{\sqrt{1 +w^2 +\ell r^{-2}}}$ is positive and uniformly bounded below on the support of $f(t)$, this bound also holds for the difference of momenta.
%
%
%
Hence, we again arrive at \eqref{supportsubset} in the relativistic case.

Next, we use the spherical representation of $\rho(t,r)$ to complete the estimate for solutions of \eqref{VP}.
Let $r > 0$ be given. Note that if $r \not\in \pi_r(S(t))$, then $f(t,r,w,\ell) = 0$ for all $w \in \bfR$ and $\ell > 0$, and thus $\rho(t,r) = 0$.
Alternatively, if $r \in \pi_r(S(t))$ then there exists $(\tilde{r}, \tilde{w}, \tilde{\ell}) \in S(0)$ such that
$$r = \mcR(t, 0, \tilde{r}, \tilde{w}, \tilde{\ell}).$$
By Lemma \ref{L1}, we find
$$r^{-1}  = \mcR(t, 0, \tilde{r}, \tilde{w}, \tilde{\ell})^{-1} \lesssim t^{-1}. $$
Using this along with the invariance of $\ell$ along characteristics, the assumption \eqref{A} on $\bS(0)$, and \eqref{supportsubset}, we have
\begin{eqnarray*}
\rho(t,r) & = & \frac{1}{r} \int_0^\infty \int_{-\infty}^{\infty} f(t,r,w,\ell)  \ell^{-1/2}\ dw d\ell\\
& \leq & C t^{-1} \int_{C_1}^{C_2} \int_{-\infty}^\infty f(t,r, w, \ell)  \ell^{-1/2} \ dw d\ell\\
& \leq & C \|f_0 \|_\infty t^{-1}  \int_{C_1}^{C_2} \left | \left \{ w : f(t,r, w, \ell) \neq 0 \right \} \right |  \ell^{-1/2}\ d \ell \\
& \leq & C t^{-1}
\end{eqnarray*}
for $r \in \pi_r(S(t))$ and $t$ large.
Combining this with the case $r \not\in \pi_r(S(t))$ and taking the supremum then yields
\begin{equation}
\label{rhodecay}
\|\rho(t)\|_\infty \lesssim t^{-1}
\end{equation}
for either \eqref{VP} or \eqref{RVP}.
\end{proof}

Finally, a lower bound on the supremum of the charge density follows trivially.
\begin{lemma}
\label{rhobelow}
There is $C > 0$ such that solutions of \eqref{VP} and \eqref{RVP} satisfy
$$\Vert \rho(t) \Vert_\infty \geq C\mfR(t)^{-2}$$
for any $t \geq 0$.
\end{lemma}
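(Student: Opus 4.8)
The plan is to exploit conservation of the total mass $\mcM$ together with the fact that the spatial support of $\rho(t)$ is confined to the ball of radius $\mfR(t)$. The only ingredient requiring justification is this confinement: since every point of $\bS(t)$ is the image under the characteristic flow of a point in $\bS(0)$, the maximal radius on $\bS(t)$ is exactly $\mfR(t)$, and hence $\pi_r(\bS(t)) \subseteq [0, \mfR(t)]$. Consequently $\rho(t,r) = 0$ for all $r > \mfR(t)$, because $f(t,r,w,\ell) = 0$ for all $w \in \bfR$ and $\ell > 0$ whenever $r \notin \pi_r(\bS(t))$ (this is the same observation already used in the proof of Lemma \ref{L5}).

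With the support bound in hand, I would recall the reduced representation of the conserved total mass,
$$\mcM = m(t,\infty) = 2\pi \int_0^\infty q\, \rho(t,q) \, dq = 2\pi \int_0^{\mfR(t)} q\, \rho(t,q) \, dq,$$
where the last equality uses $\rho(t,q) = 0$ for $q > \mfR(t)$. Bounding the integrand by the supremum of $\rho(t)$ then yields
$$\mcM \leq 2\pi \Vert \rho(t) \Vert_\infty \int_0^{\mfR(t)} q \, dq = \pi \Vert \rho(t) \Vert_\infty \, \mfR(t)^2.$$
Since $f_0$ is nontrivial and nonnegative, we have $\mcM > 0$, and rearranging gives
$$\Vert \rho(t) \Vert_\infty \geq \frac{\mcM}{\pi} \, \mfR(t)^{-2} = C \mfR(t)^{-2}$$
for all $t \geq 0$, with $C = \mcM/\pi > 0$.

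I expect no genuine obstacle here, which is why the statement is labelled as following trivially: the argument is merely the pigeonhole principle for a fixed amount of mass $\mcM$ spread over a disk of radius $\mfR(t)$. The same computation applies verbatim to both \eqref{VP} and \eqref{RVP}, since it uses only mass conservation, the reduced form \eqref{massang}, and the definition of $\mfR(t)$, none of which distinguish between the classical and relativistic dynamics. The one point to state carefully is the support inclusion $\pi_r(\bS(t)) \subseteq [0,\mfR(t)]$, which I would phrase as a direct consequence of the definition of $\mfR(t)$ as a supremum over characteristics issuing from $\bS(0)$.
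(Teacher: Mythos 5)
Your proposal is correct and follows essentially the same route as the paper: both use the fact that all the mass $\mcM$ is enclosed within radius $\mfR(t)$, so that $\mcM = 2\pi\int_0^{\mfR(t)} q\,\rho(t,q)\,dq \leq C\Vert\rho(t)\Vert_\infty\,\mfR(t)^2$, and then rearrange. Your extra care in justifying the support inclusion $\pi_r(\bS(t))\subseteq[0,\mfR(t)]$ is a reasonable elaboration of what the paper takes from the proof of Lemma~\ref{L2}, where $m(t,\mfR(t))=\mcM$ is established.
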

\begin{proof}
Using the enclosed mass, we find for any $t \geq 0$
$$\mcM = m(t, \mfR(t)) = 2\pi \int_0^{\mfR(t)} q \rho(t,q) \ dq
\leq C \Vert \rho(t) \Vert_\infty \mfR(t)^2.$$
Rearranging this inequality then yields the result.
\end{proof}

\section{Proof of Theorems}
\label{thmproof}

\begin{proof}[Proof of Theorem \ref{T1}]

To obtain the estimates stated in Theorem \ref{T1}, we merely collect results of the lemmas.
In particular, Lemma \ref{Lrefine} yields the sharp asymptotic behavior of $\mfR(t)$ and $\mfW(t)$, while combining the upper bounds of this lemma with
Lemma \ref{L1} provide the stated pointwise estimates on characteristics.
The behavior of the potential is directly implied by Lemma \ref{lem:U-decay}.
Using the upper bound on positions from Lemma \ref{Lrefine}, namely
\begin{equation}
\label{eq:mfR}
\mfR(t) \lesssim t\sqrt{\ln(t)},
\end{equation}
within Lemma \ref{L2} gives the upper and lower bounds on $\|E(t) \|_\infty$
and further inserting these estimates into Lemma \ref{Ep} provides
the upper and lower bounds on $\|E(t) \|_p$ for any $p \in (2,\infty)$.
Finally, Lemma \ref{L5} gives the upper bound on the charge density, and inserting \eqref{eq:mfR} into the result of Lemma \ref{rhobelow} gives the lower bound.
\end{proof}

\begin{proof}[Proof of Theorem \ref{T2}]
As in the previous proof, we merely collect results of the lemmas.
In particular, upper bounds on $\mfW(t)$ and $\mfR(t)$ are obtained from \eqref{eq:mfWrel} and \eqref{eq:mfRrel}, respectively, while the lower bound on $\mfR(t)$ follows from $\mcR(t) \gtrsim t$ in Lemma \ref{L1} and that for $\mfW(t)$ is given by Lemma \ref{Lrefine-rel}.
Combining these upper bounds with the results of Lemma \ref{L1} provide the stated pointwise estimates on characteristics.
As in the previous proof, the behavior of the potential is directly implied by Lemma \ref{lem:U-decay} and the remaining asymptotic behavior for the field and charge density follows by using Lemmas \ref{L2}, \ref{Ep}, \ref{L5}, and \ref{rhobelow} with the estimate
$\mfR(t) \lesssim t$ to provide the necessary lower bounds.
\end{proof}

\appendix
\section{Derivation of the Radially-Symmetric Expressions}\label{appendix}

In the appendix we demonstrate how to change variables from integrals in Cartesian coordinates in $\R^4$ to the radially-symmetric variables and justify the forms of the charge density, potential {, electric field, and energy}. First, we consider a function $\phi : \mathbb{R}^4 \to \mathbb{R}$ of the form
$$\phi(x,v) = \phi (r, w, \ell)$$
whose dependence can be represented exactly in terms of the radial coordinates
$$r = \vert x \vert, \qquad w = \frac{x \cdot v}{r}, \qquad \ell = \vert x \wedge v \vert^2.$$

To compute the $v$-integral of this function, we first note that we can, without loss of generality, rotate a given vector $x \in \mathbb{R}^2$ so that it points in the $v_1$ direction. In particular, we express such a vector as $x = [r ,0]^T$ as $\vert x \vert = r$ and rewrite
\begin{eqnarray*}
\int \phi(x,v) \ dv &= & \iint \phi \left (\vert x \vert, \frac{x \cdot v}{\vert x \vert}, \vert x \wedge v \vert^2 \right) \ dv_1 dv_2\\
& = & \iint \phi(r, v_1, r^2v_2^2 ) \ dv_1 dv_2.
\end{eqnarray*}

Because the integrand is even in $v_2$, we find
$$ \iint \phi(r, v_1, r^2v_2^2 ) \ dv_1 dv_2 = 2\int_{-\infty}^\infty \int_0^\infty \phi(r, v_1, r^2v_2^2 ) \ dv_2 dv_1.$$
Next, we change variables so that
$$\left \{ \begin{gathered}
a = v_1\\
b = r^2v_2^2
\end{gathered} \right.$$
or
$$\left \{ \begin{gathered}
v_1 = a\\
v_2 = r^{-1} b^{1/2}
\end{gathered} \right.$$
so that
$\frac{dv_2}{db} = \frac{1}{2} r^{-1}b^{-1/2}$
and find
$$2\int_{-\infty}^\infty \int_0^\infty \phi(r, v_1, r^2v_2^2 ) \ dv_2 dv_1 = \int_{-\infty}^\infty \int_0^\infty r^{-1} b^{-1/2}\phi(r, a, b) \ db da.$$

Finally, relabeling the variables of integration yields
$$\int \phi(x,v) \ dv = \int_{-\infty}^\infty \int_0^\infty r^{-1} \ell^{-1/2}\phi(r, w, \ell) \ d\ell dw,$$
and in particular,
$$\rho(t,x) = \int f(t,x,v) \ dv = r^{-1} \int_{-\infty}^\infty \int_0^\infty \ell^{-1/2}f(t, r, w, \ell) \ d\ell dw$$
so that $\rho(t,x)$ can be expressed uniquely in terms of the radial spatial variable as $\rho(t,r)$.
Furthermore, the enclosed mass can be expressed using radial coordinates as
$$m(t,x) = \int_{\vert y \vert \leq \vert x \vert} \rho(t,y) \ dy = \int_0^{2\pi} \int_0^r q \rho(t, q) \ dq d \theta,$$
which shows that $m$ also depends only upon the radial variable and simplifies to
$$m(t,r) = 2\pi \int_0^r\int_{-\infty}^\infty \int_0^\infty \ell^{-1/2}f(t,  {q}, w, \ell) \ d\ell dw d {q}.$$

Next, we derive the stated formula for the potential. In particular, because the charge density is radial and we have
$$\mcU(t,x) = -\frac{1}{2\pi} \ln(|x|) \star_x \rho(t, |x|),$$
we find that $\mcU = \mcU(t, |x|)$ is radial, as it is the convolution of radial functions.
Furthermore, using polar coordinates the above formula implies
$$\mcU(t,0) = -\frac{1}{2\pi} \int \ln(|y|) \rho(t,|y|) \ dy = -\int_0^\infty q \ln(q) \rho(t,q) \ dq.$$
As the potential is radial, the electric field $E(t,x)$ points in the outward radial direction due to the relationship $E(t,x) = -\nabla_x \mcU(t,x)$ so that
$$E(t,x) = \mathcal{E}(t,r) \frac{x}{r},$$
where $\mathcal{E}$ is determined  by the Divergence Theorem.
In particular, we have
$$m(t,r) = \int_{|x| \leq r} \rho(t,|x|) \ dx = \int_{|x| \leq r} \nabla_x \cdot E(t,x) \ dx = \int_{|x| = r} E(t,x) \cdot n \ dS = 2\pi r \mathcal{E}(t,r),$$
which implies
$$E(t,x) = \frac{m(t,r)}{2\pi r} \frac{x}{r}.$$
With this, the potential must satisfy
$$-\partial_r \mcU(t,r) = \frac{m(t,r)}{2\pi r}.$$
Thus, integrating and using the formula for $\mcU(t,0)$ computed above gives
$$\mcU(t,r) = -\frac{1}{2\pi} \int_0^r \frac{m(t,q)}{q} dq  - \int_0^\infty q \ln(q) \rho(t,q) \ dq.$$

 {Finally, the energy of either system can be derived in a straightforward manner using the radial coordinates.
In particular, as
	\begin{equation}\label{eq:expression-for-v}
	|v|^2 = |x \cdot v|^2 + |x \wedge v|^2 = w^2 + \ell r^{-2},
	\end{equation}
we can write the energy for  \eqref{VP} as
\begin{eqnarray*}
\mcE_{\mathrm{VP}} & = &\frac{1}{2} \iint |v|^2 f(t,x,v) \ dv dx + \iint \mcU(t,x) f(t,x,v) \ dv dx\\
& = & \frac{1}{2} \int_0^\infty \int_{-\infty}^\infty \int_0^\infty (w^2 + \ell r^{-2} ) f(t,r,w,\ell) \ell^{-1/2} \ d\ell dw dr\\
& \ & \quad  + \frac{1}{2}\int_0^\infty \int_{-\infty}^\infty \int_0^\infty \mcU(t,r) f(t,r,w,\ell) \ell^{-1/2} \ d\ell dw dr
\end{eqnarray*}
with an analogous representation for $\mcE_{\mathrm{RVP}}$, as stated in the introductory section.}

\end{document}